\documentclass[11pt]{article}
\usepackage{graphicx} % Required for inserting images
\usepackage{amsmath,amssymb,amsthm}
\usepackage{mathtools,url}
\usepackage{graphicx}
\usepackage{comment}
\usepackage[dvipsnames]{xcolor}
\usepackage{letltxmacro}
\makeatletter
\let\oldr@@t\r@@t
\def\r@@t#1#2{%
\setbox0=\hbox{$\oldr@@t#1{#2\,}$}\dimen0=\ht0
\advance\dimen0-0.2\ht0
\setbox2=\hbox{\vrule height\ht0 depth -\dimen0}%
{\box0\lower0.4pt\box2}}
\LetLtxMacro{\oldsqrt}{\sqrt}
\renewcommand*{\sqrt}[2][\ ]{\oldsqrt[#1]{#2}}
\makeatother

\makeatletter
\DeclarePairedDelimiterX{\pmodx}[1]{(}{)}{{\operator@font mod}\mkern6mu#1}
\renewcommand{\pmod}{%
  \allowbreak
  \if@display\mkern18mu\else\mkern8mu\fi
  \pmodx
}
\makeatother

\theoremstyle{plain}
\newtheorem{theorem}{Theorem}
\newtheorem*{theorem*}{Theorem}
\newtheorem{lemma}{Lemma}
\newtheorem{corollary}{Corollary}
\newtheorem*{corollary*}{Corollary}
\newtheorem{definition}{Definition}

\theoremstyle{definition}

\newtheorem*{remark*}{Remark}

\newcommand{\+}[1]{{#1}^\times}
  
\def\Aff{\operatorname{Aff}}

\DeclareMathOperator{\ord}{ord}

\def\R{{\mathbb R}}

\def\Z{{\mathbb Z}}
\def\C{{\mathbb C}}

\def\H{\widetilde{H}}

\def\a{{\alpha}}

\def\s{{\sigma}}
\def\k{{\varkappa}}
\def\pfi{{\varphi}}
\def\e{{\varepsilon}}

\def\id{{\rm id\,}}

\title{Panmagic permutations and $N$-ary groups}
\author{Sergiy Koshkin$^*$ and Jaeho Lee$^\dagger$\\
\\
$^*$Corresponding author\\
Department of Mathematics and Statistics\\
University of Houston-Downtown\\
1 Main Street\\
Houston, TX 77002\\
e-mail: \texttt{koshkins@uhd.edu}
\\
\\
$^\dagger$Spring Branch Academic Institute\\
14400 Fern Drive\\
Houston, TX 77079\\
e-mail: \texttt{leejaeho0802@gmail.com} 
}

\date{}

\begin{document}

\maketitle

\begin{abstract}
Panmagic permutations are permutations whose matrices are panmagic squares. Positions of $1$-s in the latter describe maximal configurations of non-attacking queens on a toroidal chessboard. Some of them, affine panmagic permutations, can be conveniently described by linear formulas of modular arithmetic, and we show that their sets have remarkable algebraic properties when one multiplies three or more of them rather than just two. In group-theoretic terms, they are special cosets of the dihedral group in the group of all affine permutations. We also investigate decomposition of panmagic permutations into disjoint cycles and find many connections with classical topics of number theory: multiplicative orders, $4k+1$ primes, primitive roots and quadratic residues.
\medskip

\textbf{Keywords}: magic square, pandiagonally magic square, modular n-queens, affine permutation, dihedral group, general affine group, polyadic group, Post coset theorem, Post cover
\medskip

\textbf{MSC}: 05A05, 05B15, 20N15, 11A07 
\end{abstract}

\section{Introduction}

Magic squares are square matrices with the same sum, called the {\it magic sum}, in each row, each column, the main diagonal and the main anti-diagonal. Their history goes as far back as ancient China c.\,200 BC, and one appears in Albrecht  D\"urer's famous painting {\it Melencolia I} (1514). They attracted the attention of mathematicians since Leonhard Euler's 1776 article linked them to Latin squares \cite{BS2}. Magic squares that additionally have the same magic sum over all `broken' diagonals and anti-diagonals were once called ``diabolic" \cite{Lehm}, but now are commonly known as pandiagonally magic or {\it panmagic} for short \cite{AlKi,BS1}. Early work mainly focused on constructions of magic and panmagic squares, but more recently their algebraic properties also received attention. 

Back in 1950, Derek Lawden claimed that in even dimensions multiplying any three panmagic squares (and then any odd number of them) produces another panmagic square \cite{Lawd}. Although this only holds for panmagic squares with additional symmetry, Lawden's was the first result (that we know of) on closure under ternary multiplication for a class of magic squares. It went entirely unnoticed, but other such classes were discovered in 1990s. Anthony Thompson might have been the first to prove it in print for $3\times3$ magic and $5\times5$ panmagic squares in 1994 \cite{Thom}. In 2012, known classes of magic and panmagic squares closed under ternary multiplication were codified and generalized to higher dimensions by Ronald Nordgren \cite{Nord}. 

Thompson even found a spanning set of $5\times5$ panmagic permutation matrices that is itself closed under ternary multiplication, no such spanning set exists for $3\times3$ magic or Lawden's panmagic squares. Moreover, this spanning set is closely associated with the well-known dihedral group $D_5$, the group of symmetries of the regular pentagon. The nature of this association remained mysterious, like magic, but it suggests looking deeper into permutations with panmagic matrices, which are simpler objects than the matrices themselves.

Although quite natural, the term ``panmagic permutations" is rarely used (it is used in \cite{AlKi}), but they are classically known under a different name. A problem from the mid-19th century, often misattributed to Carl Friedrich Gauss \cite{BS2}, asked to place $8$ non-attacking queens on the $8\times 8$ chessboard, and it was generalized to $n\times n$ chessboards by Fran\c cois Lionnet in 1869. In 1900, George Carpenter proposed to fold the board into a cylinder by identifying two opposite sides of it \cite{Carp}, and in 1918 George Polya folded it further into a torus. Both modifications have the same effect of letting the queens attack along the entire broken diagonals, and came to be known as the {\it modular $n$-queens problem} \cite{BS2}. Given its solution, replace the board with a matrix and place $1$-s into the positions of the queens and $0$-s elsewhere as on Figure\,\ref{ChessPerm}. The result is a panmagic permutation matrix, and conversely, any panmagic permutation produces a modular $n$-queens solution. 
\begin{figure}[!ht]
\vspace{-0.3em}
\begin{centering}
\includegraphics[scale=0.18]{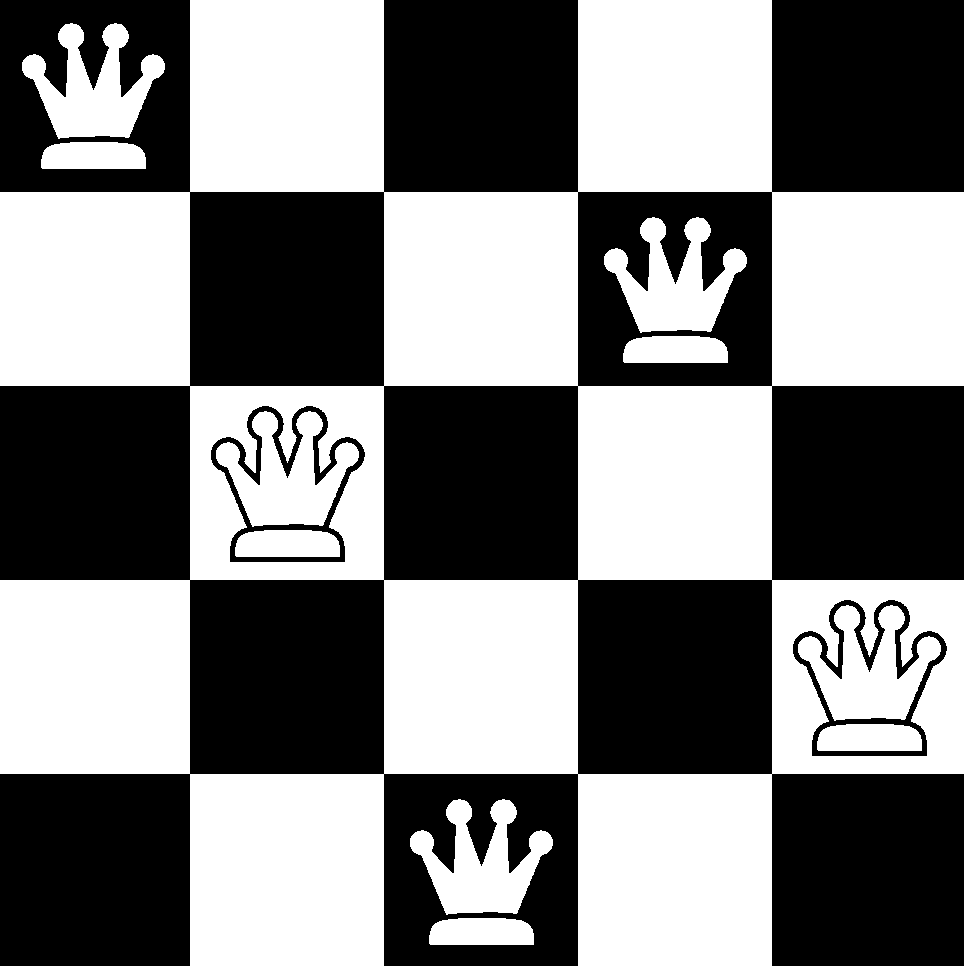}\hspace{3em} \includegraphics[scale=0.9]{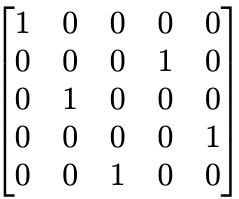}
\par\end{centering}
\caption{\label{ChessPerm} Modular $5$-queens solution and its permutation matrix.}
\end{figure}
\vspace{-0.5em}
In this guise, panmagic permutations have been extensively studied \cite{BS2}, but not, it seems, for their algebraic properties. Even the connection between modular $n$-queens and panmagic squares is little known \cite{BS1} despite being used as early as in 1900 by Charles Planck \cite{BS2}.

For that matter, ternary and more general $N$-ary groups are also little known, but the topic is, again, classical. It goes back to Edward Kasner's talk at the 1911 International Congress of Mathematicians and was originally developed by Wilhelm D\"ornte in 1920-s, at the urging of Emmy Noether \cite{GBV,Shah}. Emil Post, better known for his work in mathematical logic, proved a key structural result about them in 1940, the Post coset theorem. As will be explained in Section \ref{NarySub}, it says, essentially,  that $N$-ary groups are cosets of normal subgroups when the quotient group is cyclic of order $N-1$, and demystifies the appearance of $D_5$ in Thompson's proof.  

We will show that closure under $N$-ary multiplication is a general phenomenon for sets of panmagic permutations in prime dimensions $n=p>3$ because those sets are Post cosets of the dihedral group $D_p$. Even for the ternary case, our descriptions are simpler and much more explicit than those based on linear algebra in \cite{Nord}. A key idea is to consider a subgroup $\Aff(\Z_p)$ of the symmetric group of all permutations $S_p$ in which $D_p$ is normal. Its elements are called {\it affine permutations} and they can be expressed by simple formulas of modular arithmetic in $\Z_p$. We will also describe the cyclic structure of panmagic affine permutations and relate it to some classical topics of number theory: primitive roots, multiplicative orders, $4k+1$ primes and quadratic residues \cite{IrRos,Rosen}. Although we only consider the case of prime dimensions to keep the exposition elementary, we expect most results to generalize to composite $n$.

As many authors pointed out, magic squares provide a concrete and appealing setting for grasping non-trivial concepts of undergraduate linear algebra \cite{Ess,Mat,Thom}. We will show that panmagic permutations are more than apt to do the same for group theory and number theory. Indeed, it is treating them as permutations encoded by simple formulas of modular arithmetic that turns out to be most fruitful for discovering and proving algebraic patterns obscured in other representations.

\section{Permutations and magic squares}

As is standard, we will denote the group of all permutations of $1,2,\dots,n$, the symmetric group, $S_n$, and its identity element $\id$\! \cite{Gallian}. Permutations will often be written in cyclic notation, e.g. $(1)(2\,4\,5\,3)$, where $1$ is a fixed point, $2$ goes to $4$, $4$ to $5$, and so on. We will identify $S_n$ with the group of permutations of $\Z_n=\{0,1,\dots n-1\}$, the set of residues (congruence classes) modulo $n$. To recover a standard permutation of $1,\dots,n$, one simply replaces residue $0$ by $n$. Two special permutations will play a central role in our study, even though neither of them is magic.
\begin{definition}
Denote $\phi$ the flip permutation that swaps antipodal numbers, i.e. $\phi(i):=n+1-i$, and by $\k$ the cyclic shift permutation $\k(i)\equiv i+1\pmod{n}$ that shifts numbers one unit up and $n$ to $1$. In the cyclic notation, $\phi=(1\,n)(2\,n-1)\dots$ and $\k=(1\,2\dots n)$.
\end{definition} 
As usual, $a\mid b$ means that $b$ is an integer multiple of $a$, and $a\equiv b\pmod{n}$ means that $n\mid(b-a)$ \cite{Rosen}. We will often write congruence formulas for residues in a simplified notation that omits $n$ when it is understood, and replaces $\equiv$ by $=$. For example, we may write $n\equiv0\pmod{n}$ as just $n=0$ when no confusion results. In our simplified notation, $\phi(i)=1-i$ and $\k(i)=i+1$. The same convention applies to matrix indices when they go over $n$ or under $1$, for example, in an $n\times n$ matrix $a_{n+1\,1}$ is $a_{11}$. The {\it broken diagonals} of an $n\times n$ matrix $A$ have those $a_{ij}$ where $i-j=\ $constant, and the {\it broken anti-diagonals} have those where $i+j=\ $constant. The {\it main diagonal/anti-diagonal} is the one with the constant equal to $0$.

A square matrix $A$ is called {\it semimagic} when all of its column sums and row sums are the same. Their common value is called the {\it magic sum} \cite{Ess,Nord}. We will be mainly interested in matrices $P_\s$ with $\s\in S_n$ that permute the standard basis vectors $e_i$ as in $P_\s e_i:=e_{\s(i)}$. All of them are semimagic with the magic sum $1$ because a permutation matrix must have a single  entry $1$ in each row and column, and the rest are $0$-s. Moreover, $P_{\s\tau}=P_\s P_\tau$, where on the left we have the composition of permutations and on the right the product of matrices. 
\begin{definition}
A semimagic matrix (square) is called magic when its main diagonal and anti-diagonal sums are also equal to the magic sum. And it is called panmagic when its sums over all broken diagonals and anti-diagonals are equal to the magic sum. A permutation will be called magic or panmagic when its matrix is such.
\end{definition} 

The effect of multiplying a matrix by $P_{\phi}$ is to swap $i$-th and $(n+1-i)$-th rows/columns. This also interchanges (broken) diagonals and anti-diagonals. In turn, multiplication of a matrix by $P_{\k}^i=P_{\k^i}$ on the left/right cyclically shifts its rows/columns by $i$ positions and moves the main diagonal to one of the broken diagonals. 

Note that the number of $1$-s on the diagonal of $P_\s$ is the number of {\it fixed points} of $\s$, those for which $\s(i)=i$. Similarly, since $\phi$ flips points about the middle  the number of $1$-s on the anti-diagonal of $P_\s$ is the number of {\it flipped points} of $\s$, those with $\s(i)=n+1-i$. Thus, a permutation is magic when it has exactly one fixed and exactly one flipped point, and it is panmagic when all of its cyclic shifts $\k^i\s$ are also magic.

\section{Panmagic permutations and dihedral group}

In this section, we will introduce the dihedral group in its permutational guise, relate it to panmagic, and reformulate two classical results about modular $n$-queens solutions in the language of permutations. The subgroup of a group generated by elements of a subset $S$, i.e., the set of all finite products of their positive and negative powers, will be denoted $\langle S\rangle$. When $S=\{a_1,\dots,a_k\}$, we write simply $\langle a_1,\dots,a_k\rangle$ .
\begin{definition}
The subgroup of $S_n$ generated by $\phi$ and $\k$ will be called dihedral and denoted $D_n:=\langle \phi,\k\rangle$. Its elements will be called dihedral permutations. 
\end{definition} 
\noindent As a simple application of our trimmed down congruence notation, let us derive a commutation relation for $\phi$ and $\k$. Note that $\k^{-1}(i)=i-1$, so
\begin{equation}\label{comphikappa}
\phi\k(i)=1-(i+1)=(1-i)-1=\k^{-1}\phi(i).
\end{equation}
It is also easy to see that $\phi^2=\k^{n}=\id$, so the commutation relation implies that all permutations in $D_n$ are of the form  $\k^i$ or $\phi\k^i$ with $i=0,\dots,n-1$. Moreover, those are all distinct, so $|D_{n}|=2n$. Some authors denote $D_{2n}$ what we denote $D_{n}$, by the number of elements in it. 

Our subgroup $D_n$ is isomorphic to the usual dihedral group defined as the group of symmetries of the regular $n$-gon \cite{Gallian}. 
\begin{figure}[!ht]
\vspace{-1.1em}
\begin{centering}
\hspace{1.5em}\includegraphics[scale=0.29]{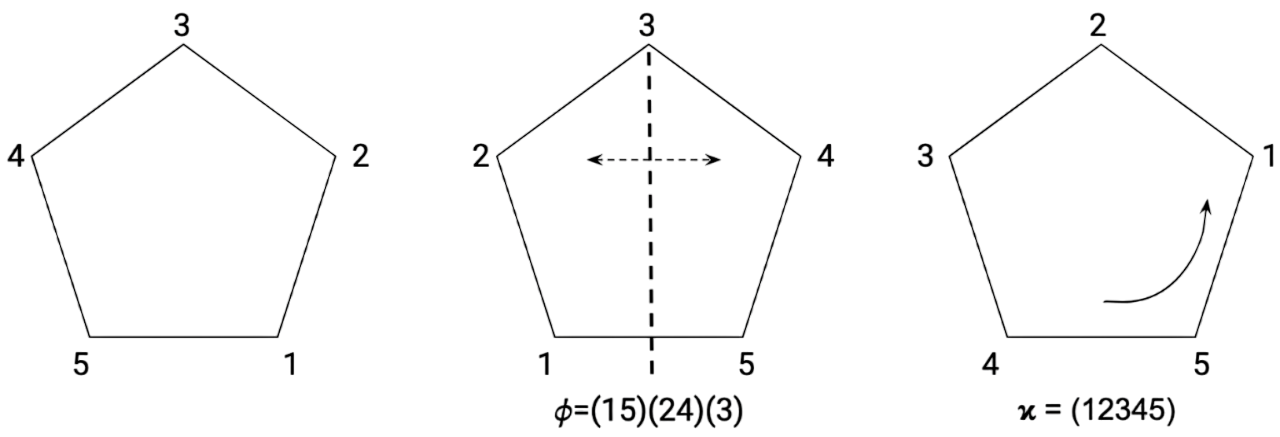}
\end{centering}
\vspace{-1.1em}
\caption{\label{DihGen} Flip $\phi$ as reflection and cyclic shift $\k$ as rotation of the regular pentagon.}
\end{figure}
\vspace{-0.3em}
Indeed, if we number its vertices from $1$ to $n$ then  $\phi$ is the permutation induced by the reflection about the middle perpendicular of one of its sides, and $\k$ is induced by the rotation by the angle $\frac{2\pi}{n}$ around its center, see Figure\,\ref{DihGen}. 

We will now explain how $D_{n}$ is related to panmagic squares and permutations. As multiplication by matrices $P_{\phi}$ and $P_{\k}$ interchanges diagonals and anti-diagonals and shifts rows and columns, respectively, it preserves panmagic. The same is true of multiplication by their products and those are exactly the permutation matrices $P_{\delta}$ with $\delta$ from the dihedral group $D_{n}$. Moreover, multiplication by some $P_{\delta}$ can move any given row/column to any other, and move any given diagonal/anti-diagonal to the main one. Thus, $A$ is a panmagic square if and only if $AP_{\delta}$ (or $P_{\delta}A$) are magic squares with the same magic sum for all $\delta\in D_n$. Accordingly, $\s$ is a panmagic permutation if and only if $\s\delta$ (or $\delta\s$) are magic for all $\delta\in D_n$.

The next theorem is a modular variant of Lionnet's 1869 arithmetic reformulation of the $n$-queens problem \cite{BS1} (see also \cite{AlKi}), and gives convenient arithmetic characterizations of dihedral, magic, and panmagic permutations. 
\begin{theorem}[Lionnet]\label{pmagic criteria} Let $\s\in S_n$ be a permutation. Then
\medskip

\noindent\textup{(i)} $\s$ is dihedral if and only if $\sigma(i)-i$ or $\sigma(i)+i$ is constant;
\medskip

\noindent\textup{(ii)} $\s$ is magic if and only if  $0$ has a unique pre-image modulo $n$ under $\sigma(i)-i$ and $1$ has a unique pre-image modulo $n$ under $\sigma(i)+i$;
\medskip

\noindent\textup{(iii)} 
$\s$ is panmagic if and only if $i\mapsto\sigma(i)\pm i$ are both injective modulo $n$, and hence also permutations. 
\end{theorem}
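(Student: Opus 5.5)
The plan is to translate each condition on the matrix $P_\s$ into a statement about residues, using the convention $P_\s e_i=e_{\s(i)}$. Under this convention the entry $1$ in column $i$ sits in row $\s(i)$. That entry lies on the broken diagonal with constant $\s(i)-i$ and on the broken anti-diagonal with constant $\s(i)+i$, both read modulo $n$. All three parts then reduce to counting how many $i$ give a prescribed value of $\s(i)\mp i$.

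For (i), I would use the description of $D_n$ already obtained from the commutation relation (\ref{comphikappa}): every dihedral permutation is $\k^c$ or $\phi\k^c$. These act as $i\mapsto i+c$ and $i\mapsto 1-i-c$, so $\s(i)-i$ or $\s(i)+i$ is constant. Conversely, suppose $\s(i)-i\equiv c$ for all $i$. Then $\s=\k^c$. If instead $\s(i)+i\equiv d$, then $\s(i)=d-i=\phi(i-1+d)$, which means $\s=\phi\k^{d-1}$. In either case $\s\in D_n$.

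For (ii), I would use the remark before this section: the number of $1$-s on the main diagonal equals the number of fixed points, i.e. the number of solutions of $\s(i)-i\equiv0$. The number of $1$-s on the main anti-diagonal equals the number of flipped points, i.e. the number of solutions of $\s(i)=n+1-i$, which is $\s(i)+i\equiv1$. The magic sum is $1$, so $\s$ is magic exactly when each of these counts is $1$. One small check is needed here. The paper's "main anti-diagonal" with constant $0$ should be read with indices in $1,\dots,n$, where $i+j=n+1\equiv1$. I would state this identification explicitly so that the constant $1$ in (ii) is consistent with the indexing.

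For (iii), I would use the criterion established above: $\s$ is panmagic if and only if every broken diagonal and every broken anti-diagonal contains exactly one $1$. By the first paragraph, this says that each residue $c$ has exactly one preimage under $i\mapsto\s(i)-i$, and each residue $d$ has exactly one preimage under $i\mapsto\s(i)+i$. So both maps are bijections of $\Z_n$. Since $\Z_n$ is finite, injectivity is equivalent to bijectivity, and this gives the phrase ``and hence also permutations''. The same conclusion can also be reached from (ii) applied to all $\k^j\s$ and $\phi\k^j\s$, since left multiplication by $\k^j$ shifts the value of $\s(i)\mp i$ by $j$. I expect no real obstacle. The only point needing care is the bookkeeping of which matrix index is $\s(i)$ and the off-by-one convention for the anti-diagonal.
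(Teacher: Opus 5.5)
Your proposal is correct apart from one sign slip in (i). Parts (i) and (ii) follow the paper, while your main argument for (iii) takes a more direct route than the paper's.

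\textbf{The slip in (i).} From $\sigma(i)=d-i$ you should get $d-i=\phi(i+1-d)$, not $\phi(i-1+d)$, since the latter equals $2-d-i$. So $\sigma=\phi\varkappa^{1-d}=\varkappa^{d-1}\phi$, which is the paper's form. By the commutation relation, your $\phi\varkappa^{d-1}=\varkappa^{1-d}\phi$ sends $i\mapsto 2-d-i$. The conclusion $\sigma\in D_n$ is unaffected.

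\textbf{The anti-diagonal constant.} Your remark is accurate: with indices in $1,\dots,n$, the main anti-diagonal has $i+j\equiv 1$, not the constant $0$ the paper states. This matches the paper's flipped-point condition $\sigma(i)=n+1-i$.

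\textbf{Part (iii).} You read the condition straight off the definition. The $1$ in column $i$ lies on broken diagonal $\sigma(i)-i$ and broken anti-diagonal $\sigma(i)+i$. So ``every broken line contains exactly one $1$'' is literally bijectivity of $i\mapsto\sigma(i)\pm i$.

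The paper instead reduces (iii) to (ii). It uses the fact that $\sigma$ is panmagic if and only if every $\delta\sigma$ with $\delta\in D_n$ is magic. It then tracks how left multiplication by $\varkappa^k$ and $\phi$ shifts and swaps the values $\sigma(i)\mp i$.

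Your route is shorter and self-contained. It does not need (ii), or the informally justified claim that $D_n$ carries every broken diagonal to the main one. It also avoids the off-by-one bookkeeping for anti-diagonals. The paper's route instead frames panmagic as ``magic under all dihedral translates,'' the viewpoint its later coset results build on. The alternative you sketch at the end is essentially the paper's argument.
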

\begin{proof}\phantom{a} 

\noindent(i) The ``only if" part is trivial from the modular formulas for $\phi$ and $\k$. Conversely, if $\s(i)-i=c$ then $\s(i)=i+c$, so $\s=\k^c$. And if $\s(i)+i=c$ then $\s(i)=1-i+c-1$, so $\s=\k^{c-1}\phi$. In both cases, $\s$ is dihedral.
\smallskip

\noindent(ii) Let $\s$ be magic. Since it has a single fixed point there is only one $i$ with $\s(i)=i$, so $0$ has a unique pre-image under $\s(i)-i$. Since it also has a single flipped point, there is only one $j$ with $\s(j)=1-j$ and $1$ has a unique pre-image under $\s(j)+j$. Running the argument in reverse, we conclude that $\s$ has one fixed and one flipped point, and hence is magic. 
\medskip

\noindent(iii) Let $\s$ be panmagic. Then $\k^{-k}\s$ is magic for any $k$, and $\k^{-k}\s(i)-i=\s(i)-i-k=0$ has a unique solution modulo $n$ by (ii). But then $\s(i)-i=k$ has a unique solution for any $k$ and $\s(i)-i$ is injective. Similarly, $\k^{k}\phi\s$ is magic for any $k$, $\k^{k}\phi\s(i)-i=1-(\s(i)+i)+k=1$ has a unique solution, and then so does $\s(i)+i=k$. Thus, $\s(i)+i$ is also injective. 

Conversely, let $\sigma(i)\pm i$ be injective modulo $n$. By (ii), it is enough to show that so are $\delta\s(i)\pm i$ for all $\delta\in D_n$, and even just for $\delta=\k,\phi$ since they generate $D_n$. But $\k\s(i)\pm i=\sigma(i)\pm i+1$ and $\phi\s(i)\pm i=1-(\sigma(i)\mp i)$, so this follows trivially.
\end{proof}
A sad consequence of Lionnet's characterization is that panmagic permutations do not exist in all dimensions. The `nice' dimensions first appeared in Carpenter's construction of modular $n$-queens solutions in 1900 \cite{Carp}, and were proved exhaustive by Polya in 1918 by tiling the plane with chessboards. The slick congruence proof of the non-trivial ``only if" part below is due to Kl\o ve, see \cite{BS1}. The ``if" part will easily follow from constructions in Section \ref{Affine}.
\begin{theorem}[Polya]\label{PolyaDim} $S_n$ contains panmagic permutations if and only if $2,3\nmid n$. 
\end{theorem}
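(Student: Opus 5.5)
The plan is to use Lionnet's criterion, Theorem~\ref{pmagic criteria}(iii), in both directions. By it, $\s$ is panmagic exactly when $\s$, $i\mapsto \s(i)+i$ and $i\mapsto\s(i)-i$ are all permutations of $\Z_n$.

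For the ``if'' part, assume $2,3\nmid n$ and take the affine candidate $\s(i)=2i$. Then $\s(i)+i=3i$ and $\s(i)-i=i$. Since $\gcd(2,n)=\gcd(3,n)=1$, multiplication by $2$ and by $3$ is invertible modulo $n$. So all three maps are bijections, and $\s$ is panmagic by (iii). This is the construction that Section~\ref{Affine} presumably generalizes to $\s(i)=ai+b$ with $a,a\pm1$ units.

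For the ``only if'' part, use the Kl\o ve-style congruence argument, comparing power sums of both sides over $\Z_n$. Suppose $\s$ is panmagic. Write $S_k=\sum_{i=0}^{n-1} i^k$, computed in $\Z$, and reduce modulo $n$ as convenient.

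First, to exclude $2\mid n$: sum $\s(i)+i$ over all $i$. Since $\s(i)+i$ runs over all residues, the sum is congruent to $S_1=n(n-1)/2$ modulo $n$. It also equals $\sum\s(i)+\sum i\equiv 2S_1 \pmod n$. Hence $S_1\equiv 0\pmod n$, that is, $n\mid n(n-1)/2$. This forces $n-1$ to be even, so $n$ is odd.

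Next, to exclude $3\mid n$, with $n$ now known to be odd: sum the squares. Since $\s(i)\pm i$ are permutations,
\[
\sum(\s(i)+i)^2+\sum(\s(i)-i)^2\equiv 2S_2 \pmod n .
\]
The left side is $2\sum\s(i)^2+2\sum i^2\equiv 4S_2$. This gives $2S_2\equiv0$, and as $n$ is odd, $S_2\equiv0\pmod n$. Now $S_2=(n-1)n(2n-1)/6$. If $n=3m$, then $S_2=m(n-1)(2n-1)/2$, and $n\mid S_2$ requires $3\mid (n-1)(2n-1)/2$. But $n-1\equiv 2$ and $2n-1\equiv 2\pmod 3$, so $3$ does not divide $(n-1)(2n-1)$, a contradiction.

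The main obstacle is the careful integer bookkeeping in this last step. The congruences must be handled with genuine integer sums rather than residues, because division by $2$ and $6$ is not available modulo $n$. Here I rely on $n$ being odd, and I keep the factor $m=n/3$ explicit instead of dividing by $6$ modulo $n$.
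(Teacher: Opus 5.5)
Your proof is correct and follows the paper's route. The ``if'' direction uses the affine permutation $\s(i)=2i$, i.e.\ $\pi_2$, which the paper also notes is panmagic when $2,3\nmid n$, and the ``only if'' direction is the same Kl\o ve power-sum argument; the differences are cosmetic: you sum $\s(i)+i$ rather than $\s(i)-i$ for the first moment, and you use oddness of $n$ to cancel the $2$ in $2S_2\equiv0$, whereas the paper works directly with $2\Sigma_2=n(n+1)(2n+1)/3$ and the primality of $3$.
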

\begin{proof} Let $\s$ be a panmagic permutation. By Theorem \ref{pmagic criteria}\,(iii), $\s(i)-i$ is also a permutation, which means that for $i=1,\dots,n$ it returns the same residues in a different order. Therefore, modulo $n$: 
$$
\Sigma_1:=\sum_{i=1}^ni=\sum_{i=1}^n(\s(i)-i)=\Sigma_1-\Sigma_1=0,
$$ 
i.e. $\Sigma_1$ is a multiple of $n$. But $\Sigma_1=\frac{n(n+1)}{2}$ and this can only happen when $n+1$ is even, so $2\nmid n$. 

Similarly, let $\Sigma_2:=\sum_{i=1}^ni^2$. Since $\s(i)+i$ is also a permutation, we have modulo $n$: 
$$
2\Sigma_2=\sum_{i=1}^n(\s(i)-i)^2+\sum_{i=1}^n(\s(i)+i)^2= 2\sum_{i=1}^n\s(i)^2+2\sum_{i=1}^ni^2=4\Sigma_2=0,
$$ 
i.e. $2\Sigma_2$ is a multiple of $n$. But $2\Sigma_2=\frac{n(n+1)(2n+1)}{3}$, and this can only happen when $n+1$ or $2n+1$ is divisible by $3$. But both numbers are relatively prime to $n$, so $3\nmid n$. 
\end{proof}
\noindent We will refer to $n>1$ with $2,3\nmid n$ as {\it Polya dimensions}, and $n$ will be assumed to be Polya from now on, unless otherwise stated.

\section{Panmagic products in dimensions $5$ and $7$} \label{57PMag}

To motivate further developments, we will analyze products of panmagic permutations in the two lowest dimensions where they exist, $n=5$ and $n=7$. For the $n=5$ case we will initially follow Thompson \cite{Thom}.

When $H$ is a subgroup of a group $G$ and $a\in G$ we will write $aH$ for the {\it left coset} of $H$, i.e., the set of all products $ah$ with $h\in H$. Thompson presents the space of $5\times5$ panmagic squares as the linear span of permutation matrices with permutations from the coset $\psi D_5$, where 
$\psi:=(1)(2\,4\,5\,3)$ is a particular panmagic permutation. Explicitly,
\begin{equation}\label{D5}
\psi D_5:=\{\psi,\psi\k,\dots,\psi\k^4,\psi\phi,\psi\phi\k,\dots,\psi\phi\k^4\},
\end{equation}
and there is a single independent linear relation among the matrices of $\psi D_5$, namely, $\sum_{i=0}^4P_{\psi\k^i}=\sum_{i=0}^4P_{\psi\phi\k^i}$. The reader can check that both sides add up to the matrix with all entries $1$. The space of $5\times5$ panmagic squares is known to be $9$-dimensional \cite{Hou}, so any $9$ of $\psi D_5$ matrices form a basis of it. By direct calculation, $D_5\cdot\psi D_5=\psi D_5\cdot D_5\subseteq\psi D_5$ and $\psi D_5\cdot\psi D_5\subseteq D_5$, where the product of sets consists of all products of elements in them. It follows immediately that $\psi D_5$ is closed under ternary multiplication.

We can check the above inclusions by using commutation relations. In the congruence notation, $\psi(i)=3-2i$, and it is a simple exercise analogous to \eqref{comphikappa} that $\phi\psi=\psi\phi\k$, $\k\psi=\psi\k^2$ and $\psi^2=\phi\k^{4}$. Therefore, products like $\phi\k^i\cdot\psi\phi\k^j$ are in $\psi D_5$ because we can move $\psi$ to the leftmost position leaving only powers of $\phi$ and $\k$ to the right of it. Similarly, products like $\psi\phi\k^i\cdot\psi\phi\k^j$ are in $D_5$ because we can move the second $\psi$ likewise. At the end, only powers of $\phi$ and $\k$ remain since $\psi^2\in D_5$.

Put this way, the exercise easily extends to $\psi D_7$ written as in \eqref{D5} but with powers of $\k$ up to $6$. We can again take $\psi(i)=3-2i$, i.e., $\psi=(1)(2\,6\,5\,7\,3\,4)$, and the commutation relations for $\phi,\psi$ and $\k,\psi$ remain the same. However, it is no longer true that $\psi D_7\cdot\psi D_7\subseteq D_7$ because $\psi^2\not\in D_7$. This time, $\psi^3\in D_7$, so $\psi^3\not\in\psi D_7$ and $\psi D_7$ is no longer ternary. It is instead {\it quaternary}, as the argument analogous to the one for $D_5$ shows, and $\psi^2D_7$ is also quaternary by the same reasoning. But the union, $\psi D_7\cup\psi^2D_7$, which happens to be the set of all panmagic permutations in $S_7$, is neither ternary, nor quaternary, nor $N$-ary for any $N$. However, if we add $D_7$ itself to this union then the result is an ordinary (binary) group.

Two lessons of $D_7$ generalize to $D_n$ in prime dimensions $n\geq7$. We should expect $N$-arity for some $N$, but not necessarily ternarity, and we should expect it for parts of the panmagic permutation set rather than for the whole of it. The group we obtained by adding the dihedral group to the set of all panmagic permutations, which we will denote $\langle\psi,D_n\rangle$, is the group generated by $\psi$ and $D_n$ jointly. $D_n$ is a normal subgroup in it, i.e., all of its left and and right cosets coincide, $\s D_n=D_n \s$, due to the commutation relations. It is called the Post cover of $\psi D_n$ in $N$-ary group theory \cite{GBV,Shah}. Discerning its identity will be our next task.

\section{Affine permutations and their group}\label{Affine}

The group $\langle\psi,D_n\rangle$, the minimal group that contained all panmagic cosets, cannot be the entire symmetric group $S_n$ of all permutations. Indeed, $D_n$ is never normal in $S_n$ for $n\geq4$ \cite{Gallian}. A clue to its identity is that all permutations in it, $\phi,\k,\psi$ and their products and powers, are expressed by very simple congruence formulas of the form $\s(i)=ai+b$. These represent affine transformations of $\Z_n$ analogous to affine transformations of $\R$ when $a$ and $b$ are real numbers, whose graphs are straight lines with slope $a$.
\begin{definition}\label{pia} A permutation in $S_n$ is called affine when it is given by an invertible affine transformation of $\Z_n$. Affine permutations will be denoted $\pi_{a,b}(i)\equiv ai+b\pmod{n}$ and linear ones $\pi_a:=\pi_{a,0}$, with $a$ called their slope. The group of affine permutations, also known as the general affine group of degree $1$ over $\Z_n$, is denoted $\Aff(\Z_n)$. 
\end{definition}
\noindent Aside from modular $n$-queens, affine permutations come up in pseudo-random number generation \cite{HD,Mars}, in enumerative combinatorics \cite{BK-V}, and in pure number theory, for example, in Zolotarev's lemma. As with the dihedral group $D_n$, we will not distinguish between $\Aff(\Z_n)$ and its isomorphic representation by permutations, i.e. we will also treat it as a subgroup of $S_n$. Clearly, $\pi_{a,b}$ is invertible, and hence a permutation, if and only if $a$ is invertible in $\Z_n$. Let us denote $\+{\Z}_n$ the group of invertible residues of $\Z_n$, called {\it units}, then
\begin{equation}\label{GAmod}
\Aff(\Z_n)=\{\s(i)=ai+b\mid a\in\+{\Z}_n,\,b\in\Z_n\}.
\end{equation}
It is a standard fact of modular arithmetic that $a\in\+{\Z}_n$ if and only if $a$ is relatively prime to $n$ \cite{Rosen}. We leave it up to the reader to show that $\Aff(\Z_n)$ can be isomorphically represented by $2\times2$ upper triangular matrices of the form $\begin{pmatrix}
a  &  b\\
0  &  1\end{pmatrix}$ with the usual matrix multiplication. 

Linear permutations $\pi_a$ will serve as convenient replacements for $\psi$ from our examples because their powers are easier to track, $\pi_a^k=\pi_{a^k}$. Every affine permutation is a product of $\pi_a$ and a power of $\k$. Indeed, $\pi_{a,b}=\k^b\pi_a$, so $\psi=\pi_{-2,3}=\k^3\pi_{-2}$. The commutation relations for $\pi_a$ are also easy to derive as in \eqref{comphikappa}, they are $\pi_a\phi=\k^{a-1}\phi\pi_a$ and $\pi_a\k=\k^a\pi_a$. The powers of $-2$ generate all units of $\Z_n$ for $n=5,7$, so we can generate all affine permutations by just multiplying powers of $\psi$ (or of $\pi_{-2}$) and powers of $\k$. Thus, the group generated by $\psi$ and $D_n$ in these dimensions is exactly  $\Aff(\Z_n)$. 

A simple consequence of Theorem \ref{pmagic criteria} is the following.
\begin{corollary}\label{AfPmag} A permutation $\pi_{a,b}\in \Aff(\Z_n)$ is dihedral if and only if $a=\pm1$, and is panmagic if and only if $a,a\pm1\in\+{\Z}_n$. It is magic if and only if it is panmagic.
\end{corollary}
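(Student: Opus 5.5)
We apply Theorem \ref{pmagic criteria} directly to $\s=\pi_{a,b}$, where $\s(i)\pm i=(a\pm1)i+b$. Throughout, $a\in\+{\Z}_n$, since $\pi_{a,b}$ is a permutation.

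\emph{Dihedral part.} If $a=1$ then $\s(i)-i=b$ is constant. If $a=-1$ then $\s(i)+i=b$ is constant. By Theorem \ref{pmagic criteria}\,(i), $\s$ is dihedral in both cases. Conversely, suppose $\s(i)-i=(a-1)i+b$ is constant. Comparing $i=0$ and $i=1$ gives $a-1=0$. Likewise, constancy of $\s(i)+i$ forces $a+1=0$. Hence $a=\pm1$.

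\emph{Panmagic part.} By Theorem \ref{pmagic criteria}\,(iii), $\s$ is panmagic if and only if both maps $i\mapsto(a\pm1)i+b$ are injective modulo $n$. An affine map $i\mapsto ci+b$ of $\Z_n$ is injective exactly when $ci=cj$ implies $i=j$, i.e.\ when $c\in\+{\Z}_n$. The standard argument is as follows. If $d=\gcd(c,n)>1$, then $c\cdot(n/d)=0$ with $n/d\neq0$, so the map is not injective. If $\gcd(c,n)=1$, then $c$ is invertible and the map is injective. Together with $a\in\+{\Z}_n$, this gives the criterion $a,a\pm1\in\+{\Z}_n$.

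\emph{Magic iff panmagic.} Panmagic implies magic trivially. The substantive direction is the converse, and it is the only place needing thought. By Theorem \ref{pmagic criteria}\,(ii), magic means that $(a-1)i+b=0$ has a unique solution and $(a+1)i+b=1$ has a unique solution modulo $n$. So it suffices to show that if $c\notin\+{\Z}_n$, then a linear congruence $ci=e$ never has exactly one solution modulo $n$.

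To see this, let $d=\gcd(c,n)>1$. If $i_0$ is a solution, then $i_0+n/d$ is a distinct solution, because $c\cdot n/d\equiv0$. Hence the number of solutions is either $0$ or at least $d\geq2$. Uniqueness of solutions therefore forces $a-1\in\+{\Z}_n$ and $a+1\in\+{\Z}_n$, which is exactly the panmagic criterion above. This argument needs no primality assumption on $n$.
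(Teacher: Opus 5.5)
Your proof is correct and follows the paper's route: you apply Theorem \ref{pmagic criteria} to $\s(i)\pm i=(a\pm1)i+b$. Your observation that $ci=e$ with $c\notin\+{\Z}_n$ has either no solutions or more than one is exactly the paper's remark that an affine map of $\Z_n$ is injective as soon as it is injective at a single point (strictly, you exhibit two solutions $i_0$ and $i_0+n/d$ rather than $d$, but two is all the argument needs).
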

\noindent The last claim follows from the fact that an affine transformation is injective if and only if it is injective at a single point. Since these properties of affine permutations entirely depend on their slopes $a$ we will also call units of $\Z_n$ dihedral or panmagic accordingly. 

It is easy to see now that the converse claim of Polya's Theorem \ref{PolyaDim} is true. Indeed, $\pi_2$ and $\pi_3$ are panmagic whenever $2,3\nmid n$ because $2,2\pm1$ and $3,3\pm1$ are units for any such $n$. And since at least one of $a-1$, $a$, and $a+1$ is divisible by $2$ and at least one by $3$ we see directly that affine panmagic permutations do not exist when $n$ is even or divisible by $3$. 

By Corollary \ref{AfPmag}, the dihedral group $D_n$ is a subgroup of $\Aff(\Z_n)$ consisting of permutations $\pi_{\e,b}$ with $\e=\pm1$. The next theorem shows that $\Aff(\Z_n)$ is the largest subgroup of $S_n$ in which $D_n$ is normal, confirming that it is the `right' group to consider.
\begin{theorem}\label{NormDn} Let $n\geq3$ and $\s\in S_n$. Then $\s D_n=D_n\s$ if and only if $\s\in \Aff(\Z_n)$.
\end{theorem}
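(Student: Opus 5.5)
The easy direction is a direct computation with the commutation relations already recorded. If $\s=\pi_{a,b}=\k^b\pi_a$, then $\pi_a\k=\k^a\pi_a$ and $\pi_a\phi=\k^{a-1}\phi\pi_a$ give $\pi_a D_n\subseteq D_n\pi_a$. Conjugating by $\k^b$ preserves $D_n$, so $\s D_n\subseteq D_n\s$. The two sets have the same size $2n$, hence they are equal.

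For the converse, assume $\s D_n=D_n\s$, which is the same as $\s D_n\s^{-1}=D_n$. The rotations $\langle\k\rangle$ form the only cyclic subgroup of order $n$ in $D_n$ when $n\geq3$. For $n\ge 3$ the reflections $\phi\k^i$ have order $2$, and $n>2$, so an element of order $n$ must be a rotation. For $n=4$ one has to check that $\langle\k\rangle$ is the unique cyclic subgroup of order $4$; this holds because all reflections have order $2$. Conjugation is an automorphism of $D_n$, so it preserves $\langle\k\rangle$. Hence $\s\k\s^{-1}=\k^a$ for some $a$, and $a$ must be coprime to $n$ since $\k^a$ has order $n$.

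Next I read this relation pointwise: $\s(i+1)=\s(i)+a$ for all $i\in\Z_n$. Induction from $i=0$ gives $\s(i)=ai+\s(0)$, so $\s=\pi_{a,b}$ with $b=\s(0)$ and $a\in\+{\Z}_n$. This is exactly membership in $\Aff(\Z_n)$ by \eqref{GAmod}.

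The step needing the most care is the claim that $\langle\k\rangle$ is characteristic in $D_n$. One can argue it in the paper's own language instead. By Theorem \ref{pmagic criteria}\,(i), each $\phi\k^i$ satisfies $\s(i)+i=$ const, which makes it an involution, while $\k^c$ with $c\neq0$ has no fixed points. Since conjugation preserves order, no $\k^c$ of order $n\geq3$ can go to an involution. The only wrinkle is $n=4$, where $\k^2$ has order $2$, but $\k$ itself has order $4$ and that is all the argument needs. In fact we only need $\s\k\s^{-1}\in\langle\k\rangle$, and that follows from $\k$ having order $n\ge3$ while every element outside $\langle\k\rangle$ has order $2$.
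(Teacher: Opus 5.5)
Your proof is correct and follows essentially the same route as the paper: both reduce to $\s\k\s^{-1}\in D_n$, rule out the case where this element is a reflection, and then read $\s\k=\k^a\s$ pointwise as $\s(i+1)=\s(i)+a$. The easy direction is handled the same way, via the commutation relations for $\pi_a$. The only difference is cosmetic: the paper excludes the reflection case pointwise, noting that $\s\k=\pi_{-1,a}\s$ would force $\s(i+2)=\s(i)$, which contradicts injectivity for $n\geq3$. That is your order argument ($\k$ has order $n\geq3$, reflections have order $2$) written out pointwise, and your fixed-point remark in the last paragraph is not needed.
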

\begin{proof} If $\s D_n=D_n\s$ then $\s\k=\pi_{\e,a}\,\s$ with $\e=\pm1$ and $a\in\Z_n$. Applying both sides to $i$, we get $\s(i+1)=\e\s(i)+a$. If $\e=-1$ then $\s(i+2)=-\s(i+1)+a=\s(i)$, so $\s$ cannot be a permutation for $n\geq3$. If $\e=1$ and we let $\s(0)=b$ then, by induction, $\s(i)=ai+b$. Since $\s$ is a permutation, $a\in\+{\Z}_n$, so $\s=\pi_{a,b}\in \Aff(\Z_n)$. 

Conversely, if $\s\in \Aff(\Z_n)$ then $\s=\pi_{a,b}=\k^b\pi_a$, and $\s D_n=D_n\s$ follows from the commutation relations for $\pi_a$.
\end{proof}
The number of units in $\Z_n$ is given by Euler's totient function $\pfi(n):=|\+{\Z}_n|$, where $|S|$ denotes the cardinality of $S$. When $n=p_1^{\a_1}\cdots p_m^{\a_m}$ is the prime factorization of $n$, we have \cite{Rosen}
\begin{equation}\label{totient}
\pfi(n)=n\left(1-\frac1{p_1}\right)\cdots\left(1-\frac1{p_m}\right).
\end{equation}
We now see from \eqref{GAmod} that $|\Aff(\Z_n)|=\pfi(n)n$. 

The number of panmagic units of $\Z_n$ can be similarly counted. By Corollary \ref{AfPmag}, $a$ is panmagic when  $a,a\pm1\in\+{\Z}_n$, so panmagic units are in $1$-$1$ correspondence with triples of consecutive units. Back in 1869, the same year when Lionnet formulated the $n$-queens problem, Victor Schemmel generalized Euler's totient to $k$-totients $\pfi_k(n)$ that count $k$-tuples of consecutive units, see \cite{Lehm}. It can be calculated analogously to  \eqref{totient}: 
\begin{equation}\label{ktotient}
\pfi_k(n)=n\left(1-\frac{k}{p_1}\right)\cdots\left(1-\frac{k}{p_m}\right)
\end{equation}
when all $p_i\geq k$, and $0$ otherwise. The number of panmagic units is thereby $\pfi_3(n)$, and we see that there are none in non-Polya dimensions. For primes, $\pfi(p)=p-1$ and $\pfi_3(p)=p-3$, as expected.

Are all panmagic permutations affine? This question received much attention in the modular $n$-queens literature, and the answer, in general, is no. They are for $n=5,7,11$, but in all Polya dimensions $n\geq13$ there exist non-affine panmagic permutations. First examples of them were constructed only in 1975 by Bruen and Dixon, and their nature and properties are still poorly understood \cite{BS2}. We will, therefore, confine our attention to affine panmagic, which already has plenty of intricacies to offer.

\section{Affine panmagic in prime dimensions}

Our identification of the Post cover group with $\Aff(\Z_n)$ for $n=5,7$ depended on $-2$ generating all units as its powers. Such units are called {\it primitive roots} in number theory and they do not exist for all $n$. Luckily, they do exist for $n=p\geq3$ prime, by a classical theorem of Gauss \cite{Rosen}. If $r$ is a primitive root then we can generate all $\pi_a$ with $\pi_r$ since $\pi_{r^k}=\pi_r^k$. And since any affine permutation is a product of $\pi_a$ and a power of $\k$ we have $\Aff(\Z_p)=\langle\pi_r,\k\rangle$ for any odd prime $p$. In this long section, we use primitive roots to generalize the algebraic properties of panmagic permutations observed for $p=5,7$ and determine their cycle structure. 

\subsection{Closure under $N$-ary multiplication}

When $n=p$ is prime, a linear permutation $\pi_a$ from Definition \ref{pia} is panmagic for any $a\neq 0,\pm1$ by Corollary \ref{AfPmag}. This means that among units only $a=\pm 1$ are not panmagic, and those are dihedral. 
\begin{corollary}[{\bf Panmagic/dihedral dichotomy}]\label{affinegeneration} When $p$ is prime, every affine permutation is either panmagic or dihedral, and has the form $\pi_r^i\k^j$, where $r$ is a primitive root modulo $p$. 
\end{corollary}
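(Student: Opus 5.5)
The statement has two parts, and I will prove them separately: the dichotomy and the normal form. Both rest on Corollary~\ref{AfPmag} and the description \eqref{GAmod} of $\Aff(\Z_p)$.

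For the dichotomy, take an arbitrary affine permutation $\pi_{a,b}$ with $a\in\+{\Z}_p$, so $a\neq0$. There are two cases.
\begin{itemize}
\item If $a=\pm1$, Corollary~\ref{AfPmag} makes $\pi_{a,b}$ dihedral.
\item Otherwise $a\neq0,\pm1$ modulo the prime $p$. Then $a$, $a+1$ and $a-1$ are all nonzero residues, hence units because $p$ is prime. By Corollary~\ref{AfPmag}, $\pi_{a,b}$ is panmagic.
\end{itemize}
So every affine permutation falls into at least one of the two classes. (They are also disjoint for $p>3$: if $a=\pm1$, then one of $a\pm1$ equals $0$ and is not a unit. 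The statement only needs "either/or", but I will note this.)

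For the normal form, fix a primitive root $r$ modulo $p$, which exists by Gauss's theorem. I will not use the factorization $\pi_{a,b}=\k^b\pi_a$, since that puts the power of $\k$ on the left. Instead I rewrite $\pi_{a,b}$ with $\k$ on the right: $\pi_a\k^j(i)=a(i+j)=ai+aj$. Since $a$ is a unit, I can choose $j=a^{-1}b$, and then $\pi_{a,b}=\pi_a\k^{j}$. Equivalently, this follows from the commutation relation $\pi_a\k=\k^a\pi_a$ stated in Section~\ref{Affine}. Next, write $a=r^i$ for some $i$, which is possible because $r$ generates $\+{\Z}_p$. Since $\pi_r^i=\pi_{r^i}$, this gives $\pi_{a,b}=\pi_r^i\k^j$.

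No step here is a real obstacle. The only point needing care is the order of the factors: the statement places the $\k$-power on the right, so the translation $b$ has to be pushed through $\pi_a$, which costs a factor of $a^{-1}$.
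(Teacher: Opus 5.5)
Your proof is correct and follows essentially the same route as the paper, which gives no separate proof and relies on the surrounding discussion. The dichotomy comes from Corollary~\ref{AfPmag}, since $a\neq0,\pm1$ forces $a,a\pm1$ to be units modulo a prime, and the normal form comes from writing $a=r^i$ with $\pi_r^i=\pi_{r^i}$ and splitting off a power of $\k$. Your handling of the factor order is slightly cleaner than the paper's text: it factors $\pi_{a,b}=\k^b\pi_a$ and leaves the reordering implicit via $\pi_a\k=\k^a\pi_a$, whereas your choice $j=a^{-1}b$ matches the stated form $\pi_r^i\k^j$ directly.
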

Recall that when $H$ is a normal subgroup of a group $G$ the set of cosets of $H$ with induced multiplication on them is called the quotient group and denoted $G/H$ \cite{Gallian}. We are now ready to prove the first of our main theorems about affine panmagic cosets that generalizes the $p=5,7$ examples. 
\begin{theorem}\label{primetheory} Let $p\geq5$ be prime, $r$ be its primitive root, $N:=\frac{p+1}{2}$ and $C:=\pi_rD_p$. Then for $i=1,\dots,N-2$ the cosets $C^i=\pi_r^iD_p$  consist of panmagic permutations and are closed under $N$-ary multiplication. Moreover, 
$$\langle \pi_r,D_p\rangle=\cup_{i=1}^{N-1}C^i=\Aff(\Z_p),$$
$D_p$ is its normal subgroup, and the quotient group is $\Aff(\Z_p)/D_p\simeq\Z_{N-1}=\Z_{\frac{p-1}2}$. 
\end{theorem}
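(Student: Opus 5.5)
The plan is to work entirely inside $\Aff(\Z_p)$, using the slope map. The map $\lambda:\Aff(\Z_p)\to\+{\Z}_p$, $\pi_{a,b}\mapsto a$, is a homomorphism, since $\pi_{a,b}\pi_{c,d}(i)=a(ci+d)+b$ has slope $ac$. It is surjective, because $\lambda(\pi_a)=a$. By Corollary \ref{AfPmag} its preimage of the subgroup $\{\pm1\}$ is exactly $D_p$. Hence $D_p$ is normal in $\Aff(\Z_p)$; this also follows from Theorem \ref{NormDn}. Composing $\lambda$ with the projection onto $\+{\Z}_p/\{\pm1\}$ gives a surjection with kernel $D_p$. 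Since $\+{\Z}_p$ is cyclic of order $p-1$, generated by $r$ (Gauss), the quotient $\+{\Z}_p/\{\pm1\}$ is cyclic of order $\frac{p-1}{2}=N-1$, generated by the class of $r$. Therefore $\Aff(\Z_p)/D_p\simeq\Z_{N-1}$, and this quotient is generated by the coset $C=\pi_rD_p$.

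Next I identify the cosets. Because $D_p$ is normal, $C^i=(\pi_rD_p)^i=\pi_r^iD_p=\pi_{r^i}D_p$, using the set-product identity $D_p\pi_r=\pi_rD_p$. The cosets $C^1,\dots,C^{N-1}$ are pairwise distinct, since the class of $r$ has order $N-1$ in the quotient. Their union is the whole group because the index is $N-1$. In particular $C^{N-1}=D_p$, because $r^{(p-1)/2}=-1$ for a primitive root, and $\pi_{-1}\in D_p$. Every element of the subgroup $\langle\pi_r,D_p\rangle$ lies in some $\pi_r^kD_p$, and conversely each such coset is contained in it. So $\langle \pi_r,D_p\rangle=\cup_{i=1}^{N-1}C^i=\Aff(\Z_p)$, which also matches Corollary \ref{affinegeneration}.

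For panmagic, an element of $C^i$ has slope $\pm r^i$. For $1\le i\le N-2$ we have $r^i\neq\pm1$, since the order of $r$ is $p-1=2(N-1)$ and $-1=r^{N-1}$. So $\pm r^i\notin\{0,\pm1\}$, and by Corollary \ref{AfPmag} (prime case) every element of $C^i$ is panmagic.

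For $N$-ary closure, take $N$ elements of $C^i$. Their product lies in $C^{iN}$, because slopes multiply modulo $\pm1$. Since $N=(N-1)+1$, we get $C^{iN}=C^{i(N-1)}C^i=D_pC^i=C^i$, using $C^{N-1}=D_p$ from the quotient order. Hence the product is again in $C^i$.

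The only delicate point is the fact $r^{(p-1)/2}=-1$, which makes $D_p$ rather than just $\{\pi_{1,b}\}$ the right kernel. It follows because $x^2=1$ has only the solutions $\pm1$ in the field $\Z_p$, and $r^{(p-1)/2}\neq1$ since $r$ has order $p-1$. The rest is routine coset bookkeeping.
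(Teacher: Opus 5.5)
Your proof is correct, but it is organized around a device the paper does not use.

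\textbf{Your route.} You introduce the slope homomorphism $\lambda:\Aff(\Z_p)\to\+{\Z}_p$ and note, via Corollary~\ref{AfPmag}, that $D_p$ is exactly the kernel of the composite $\Aff(\Z_p)\to\+{\Z}_p/\{\pm1\}$. Normality, the isomorphism $\Aff(\Z_p)/D_p\simeq\Z_{(p-1)/2}$, and the fact that $C$ generates the quotient then all follow at once from the first isomorphism theorem and the cyclicity of $\+{\Z}_p$.

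\textbf{The paper's route.} It gets $\pi_rD_p=D_p\pi_r$ from the commutation relations $\pi_r\phi=\k^{r-1}\phi\pi_r$ and $\pi_r\k=\k^r\pi_r$. It shows every coset is some $C^i$ by writing $\pi_{a,b}=\k^b\pi_a$ with $a=r^i$. It locates $C^{N-1}=D_p$ via $r^{(p-1)/2}=-1$, and builds the isomorphism $C^i\mapsto i$ by hand from $C^iC^j=C^{i+j}$.

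\textbf{Comparison.} The paper stays at the level of explicit permutation identities, in line with its elementary style. Yours is shorter and explains normality conceptually: $D_p$ is a kernel. It also transfers directly to Theorem~\ref{primestrictarity}, where $\langle\pi_a,D_p\rangle=\lambda^{-1}(\langle a,-1\rangle)$ and the quotient $\langle a,-1\rangle/\{\pm1\}$ visibly has order $\ord_p(a^2)$. In addition, you check $N$-ary closure for every $C^i$ via $C^{iN}=(C^{N-1})^iC^i=C^i$. The paper's proof writes out only $C^N=C$, so your argument covers the stated claim for $i=2,\dots,N-2$ more explicitly.

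\textbf{Minor point.} In your last paragraph you say $r^{(p-1)/2}=-1$ is what makes the kernel $D_p$ rather than $\{\pi_{1,b}\}$. That is a misattribution: the kernel is $D_p$ because of Corollary~\ref{AfPmag} and your choice to quotient by $\{\pm1\}$. The identity $r^{(p-1)/2}=-1$ enters only in locating $C^{N-1}=D_p$ and excluding $r^i=\pm1$ for $1\le i\le N-2$. In your framework both facts already follow from the class of $r$ having order exactly $N-1$ in the quotient.
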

\begin{proof}
The commutation relations, $\pi_r\phi=\k^{r-1}\phi\pi_r$ and $\pi_r\k=\k^r\pi_r$, imply that left and right $\pi_r$-cosets of $D_p=\langle\phi,\k\rangle$ are equal, so $C=\pi_rD_p=D_p\pi_r$. Since $D_p$ is a group we have $D_p\cdot D_p=D_p$ and, by induction, 
$$
C^i=\pi_rD_p\pi_rD_p\cdots\pi_rD_p =\pi_r^2D_p\cdots\pi_rD_p=\cdots=\pi_r^iD_p=D_p\pi_r^i\,.
$$ 
Since $r$ is a primitive root every coset of $D_p$ in $\Aff(\Z_p)$ is of the form $D_p\k^b\pi_{r^i}=D_p\pi_{r^i}=C^i$, so $\langle \pi_r,D_p\rangle=\Aff(\Z_p)$ and $D_p$ is normal in it. 

The order of $r$ is $p-1$, the size of $\+{\Z}_p$, so $r^{p-1}=1$ and $p-1$ is the smallest such power. Therefore, $r^{\frac{p-1}{2}}=-1$, as $-1$ is the only other unit that squares to $1$, and $\frac{p-1}{2}$ is the smallest power such that $r^i=\pm1$. But $\pi_r^i=\pi_{r^i}\in D_p$ if and only if $r^i=\pm1$, so the smallest $i$ such that $\pi_r^i\in D_p$ is $\frac{p-1}{2}=N-1$. Therefore, $C^{N-1}=D_p$ and all cosets are exhausted by $C^i$ with $1\leq i\leq N-1$. Moreover, $C^i\cdot C^j=\pi_r^i\pi_r^j\,D_p=\pi_r^{i+j}\,D_p=C^{i+j}$, so  $C^i\mapsto i$ is an isomorphism between the quotient group and $\Z_{N-1}$.

By Corollary \ref{affinegeneration}, every permutation in $\Aff(\Z_p)$ is either panmagic or dihedral, and all dihedral ones are in $D_p$. Since cosets partition the group we conclude that $C^i$ for $1\leq i\leq N-2$ contain only panmagic affine permutations, and (jointly) all of them. 
Finally, $C^N=CC^{N-1}=CD_p=C$, so $N$-ary products of elements of $C$ are again in $C$ and it is closed under $N$-ary multiplication.
\end{proof}
Theorem \ref{primetheory} is a global one and gives us a broad algebraic picture of how affine panmagic permutations in prime dimensions behave. But, in general, the arity $N$ it gives is not the smallest possible for every coset. Indeed, if $C^4=D_p$ then $(C^2)^2=D_p$, so $C^2$ will not just be quinternary like $C$, but also ternary. Conversely, if $C$ is $N'$-ary then we can take a product of its $N'$ elements, multiply it by $N'-1$ more elements, then by $N'-1$ more, and so on. All of those products will again be in $C$. This means that closure under $N'$-ary multiplication implies closure under $N$-ary multiplication for any $N=N'+k(N'-1)$. In particular, closure under ternary products implies closure under any odd-numbered products. 

In the light of the above ambiguity, we would like to distinguish the smallest possible arity for each coset.
\begin{definition} A subset $C$ of a group will be called strictly $N$-ary, and $N$ its strict arity, when it is closed under $N$-ary multiplication, but not under $N'$-ary multiplication with any $N'<N$. 
\end{definition}
\noindent To find strict arities, we will supplement Theorem \ref{primetheory} by a local theorem that treats each coset individually. Recall that the {\it multiplicative order} $\ord_p(x)$ of $x\in\Z_p^{\times}$ is the smallest positive integer $i$ such that $x^i\equiv1\pmod{p}$.
\begin{theorem}\label{primestrictarity} Let $p\geq5$ be prime, $a\in\Z_p^{\times}$ be panmagic, and $N:=\ord_p(a^2)+1$. Then the coset $C:=\pi_aD_p$ consists of panmagic permutations and is strictly $N$-ary. Moreover, 
$\langle\pi_a,D_p\rangle=\cup_{i=1}^{N-1}C^i$, $D_p$ is a normal subgroup in it, and the quotient group is $\langle\pi_a,D_p\rangle/D_p\simeq\Z_{N-1}$. 
\end{theorem}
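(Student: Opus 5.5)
I will follow the proof of Theorem \ref{primetheory} almost verbatim, replacing the primitive root $r$ by the given panmagic unit $a$. The two things that really need checking are the smallest $i$ with $\pi_a^i\in D_p$ and the strictness claim.

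First, the commutation relations $\pi_a\phi=\k^{a-1}\phi\pi_a$ and $\pi_a\k=\k^a\pi_a$ give $\pi_aD_p=D_p\pi_a$, as does Theorem \ref{NormDn}. By induction, $C^i=\pi_a^iD_p=\pi_{a^i}D_p$. Hence $\langle\pi_a,D_p\rangle$ is the union of the cosets $C^i$ over $i\ge 1$, and $D_p$ is normal in it.

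Next, $\pi_{a^i}\in D_p$ exactly when $a^i=\pm1$, by Corollary \ref{AfPmag}. So I need the least $m\ge1$ with $a^m=\pm1$, and I claim $m=\ord_p(a^2)$. Indeed, $a^m=\pm1$ implies $a^{2m}=1$, so $\ord_p(a^2)\mid m$. Conversely, $a^{2k}=1$ with $k=\ord_p(a^2)$ means $(a^k)^2=1$, so $a^k=\pm1$, since $\pm1$ are the only square roots of $1$ modulo a prime. Thus $m=N-1$.

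Consequently $C^{N-1}=D_p$, and $C^1,\dots,C^{N-1}$ are pairwise distinct, because $C^i=C^j$ with $i<j$ would force $\pi_a^{j-i}\in D_p$ with $0<j-i<N-1$. Therefore $\langle\pi_a,D_p\rangle=\cup_{i=1}^{N-1}C^i$, and $C^i\mapsto i \bmod (N-1)$ is an isomorphism of the quotient with $\Z_{N-1}$, since $C^iC^j=C^{i+j}$.

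The elements of $C$ are panmagic by Corollary \ref{AfPmag}, because every element $\k^b\pi_a$ or $\k^b\phi\pi_a$ has slope $\pm a\neq\pm1$. Also $C^N=C\,C^{N-1}=CD_p=C$, so $C$ is closed under $N$-ary multiplication.

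The main point is strictness. Suppose $C$ were closed under $N'$-ary multiplication with $2\le N'<N$. The set of all $N'$-fold products of elements of $C$ is exactly $C^{N'}$, which is a single coset. Closure means $C^{N'}\subseteq C$, and since both are cosets of $D_p$ this forces $C^{N'}=C$. That gives $C^{N'-1}=D_p$, i.e. $\pi_a^{N'-1}\in D_p$ with $1\le N'-1<N-1$, contradicting the minimality of $m=N-1$.

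It only remains to note that $N\ge 2$, which holds because $a^2\neq1$ for a panmagic unit, so $\ord_p(a^2)\ge2$ and in fact $N\ge3$. The product-set identity $C^{N'}=\pi_a^{N'}D_p$ is the step that makes strictness work. It holds because $D_p$ is a subgroup, so $D_pD_p=D_p$, and because $\pi_a$ commutes with $D_p$ as a set.
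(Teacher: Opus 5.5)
Your proposal is correct and follows essentially the same route as the paper: you adapt the proof of Theorem~\ref{primetheory} with $a$ in place of $r$, show that the least $i$ with $a^i=\pm1$ is $\ord_p(a^2)$, and derive strictness from $\pi_a^{i}\notin D_p$ for $0<i<N-1$. Your strictness step, which uses that the product set $C^{N'}$ is a single coset, is a repackaging of the paper's observation that $\pi_a^{N'}$ is an $N'$-fold product of elements of $C$ lying outside $C$.
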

\begin{proof}
The proof is analogous to the proof of Theorem \ref{primetheory} with $a$ replacing $r$, so we only point out the differences. The group $\langle\pi_a,D_p\rangle$ is no longer necessarily the whole group $\Aff(\Z_p)$, but might be its proper subgroup. The smallest $i$ such that $a^i=\pm1$ is also the smallest $i$ such that $(a^i)^2=(a^2)^i=1$, i.e. $\ord_p(a^2)$, which gives the formula for $N$. That $N$ is the strict arity follows from the fact $a^i\neq\pm1$ for $i<\ord_p(a^2)$. Indeed,  $\pi_{a^i}=\pi_a^i\not\in D_p$ for such $i$, hence $\pi_a^{i+1}\not\in C$, so $C$ is not closed under $N'$-ary multiplication with $N'=i+1<\ord_p(a^2)+1=N$.
\end{proof}
\noindent Theorem \ref{primestrictarity} allows us to find all primes $p$ that admit strictly $N$-ary panmagic cosets of $D_p$. Those must satisfy $C^{N-1}=D_p$ and not coincide with $D_p$ itself. Therefore, we need $\frac{p-1}{2}$ from Theorem \ref{primetheory} to be divisible by $N-1$, meaning that $p-1$ must be divisible by $2(N-1)$. 
\begin{corollary}\label{primeallstrictarity}
Let $p\geq5$ be prime and $N\geq3$. Then strictly $N$-ary affine panmagic cosets of $D_p$ exist if and only if $p=2(N-1)k+1$ for some integer $k$. When $p$  is of this form and $r$ is its primitive root then $\pi_{r^k}D_p$ is such a coset. In particular, strictly ternary panmagic cosets of $D_p$ exist if and only if $p=4k+1$ for some integer $k$.
\end{corollary}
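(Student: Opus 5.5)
By Theorem \ref{primestrictarity}, the question reduces to number theory in the cyclic group $\Z_p^{\times}$. A panmagic unit $a$ (that is, $a\neq0,\pm1$) gives a strictly $N$-ary coset $\pi_aD_p$ exactly when $\ord_p(a^2)=N-1$. Two remarks make this precise. First, affine panmagic cosets of $D_p$ are all of the form $\pi_aD_p$: any panmagic $\pi_{a,b}=\k^b\pi_a$ lies in $D_p\pi_a=\pi_aD_p$ by Theorem \ref{NormDn}, and the dichotomy of Corollary \ref{affinegeneration} ensures that such a coset contains no dihedral elements. Second, $N\geq3$ means $N-1\geq2$, so $a^2\neq1$ and hence $a\neq\pm1$ automatically. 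The corollary is therefore equivalent to the following claim: some $a\in\Z_p^{\times}$ has $\ord_p(a^2)=N-1$ if and only if $2(N-1)\mid p-1$.

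\emph{Necessity.} If $\ord_p(a^2)=N-1$, then $\ord_p(a)$ is either $N-1$ or $2(N-1)$, since $a^{2(N-1)}=1$. I will show it must be $2(N-1)$ or, failing that, still deduce the divisibility. Write $m=\ord_p(a)$; then $\ord_p(a^2)=m/\gcd(m,2)$. If $m$ is even, then $m=2(N-1)$ divides $p-1$ by Lagrange's (Fermat's) theorem. If $m$ is odd, then $m=N-1$, and $-a$ has order $2m$ because $p$ is odd; hence $2(N-1)\mid p-1$ again. A quicker route is also available: $a^2$ is a quadratic residue, so it lies in the subgroup of squares, which has order $\frac{p-1}{2}$ (this is the index-two subgroup generated by $r^2$). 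Therefore $N-1=\ord_p(a^2)$ divides $\frac{p-1}{2}$, which is exactly $2(N-1)\mid p-1$. This matches the remark before the corollary, namely that $C^{N-1}=D_p$ and the quotient has order $\frac{p-1}{2}$ by Theorem \ref{primetheory}. I will use this version.

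\emph{Sufficiency and the explicit coset.} Suppose $p=2(N-1)k+1$ and let $r$ be a primitive root, so $\ord_p(r)=p-1=2(N-1)k$. Take $a=r^k$. Then $\ord_p(a)=2(N-1)$ and $a^2=r^{2k}$ has order $\frac{2(N-1)k}{2k}=N-1$. Since $N-1\geq2$, $a\neq\pm1$, so $a$ is panmagic and $a\neq0$. Theorem \ref{primestrictarity} then shows that $\pi_{r^k}D_p$ is strictly $N$-ary. The ternary statement is the special case $N=3$, where $2(N-1)=4$.

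The only real content is the standard cyclic-group fact that $\Z_p^{\times}$ contains an element of every order dividing $p-1$, here realized explicitly through powers of $r$. The step most likely to need care is the translation in the first paragraph between ``affine panmagic coset'' and ``$\pi_aD_p$ with panmagic $a$''. Once that is settled, the rest is order arithmetic.
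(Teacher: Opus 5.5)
Your proposal is correct and follows essentially the same route as the paper. You reduce via Theorem~\ref{primestrictarity} to the existence of a unit with $\ord_p(a^2)=N-1$, use that every square lies in the subgroup generated by $r^2$ (order $\frac{p-1}{2}$) to get $N-1\mid\frac{p-1}{2}$, and conversely take $a=r^k$; your extra remarks, that every affine panmagic coset is some $\pi_aD_p$ and that $N\geq3$ forces $a\neq\pm1$, only spell out details the paper leaves implicit.
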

\begin{proof} By Theorem \ref{primestrictarity}, strictly $N$-ary cosets of $D_p$ exist if an only if there are units $a\in\Z_p^{\times}$ with $\ord_p(a^2)=N-1$. Since all $a\in\Z_p^{\times}$ are powers of $r$, all $a^2$ are powers of $r^2$. But $\ord_p(r^2)=\frac{p-1}{2}$, so the orders of its powers, $(r^2)^k$, are exactly the divisors of $\frac{p-1}{2}$, which means that $\frac{p-1}{2}=(N-1)k$ for some integer $k$. Conversely, when this identity holds, the order of $(r^{k})^2$ will be $N-1$. Thus, $\pi_{r^k}D_p$ will be strictly $N$-ary by Theorem \ref{primestrictarity}.
\end{proof}
\noindent Primes of the form $4k+1$, like 5, 13, 17, 29, 37, 41..., are famous in number theory. They were singled out already by Albert Girard and Pierre de Fermat, who stated that they are exactly the primes representable as the sums of two perfect squares. The first proof was given by Euler. They also happen to be the only primes for which $-1$ is a quadratic residue, the square of another unit \cite{Rosen}. Unsurprisingly, $4k+1$ primes appear in connection with the modular queens problem as well \cite{BS1}, and they will reappear in Corollary \ref{4cyc}. 

\subsection{Panmagic cycle types}

We will now turn from algebra to the structure of individual affine panmagic permutations. Recall that intrinsic character of a permutation is described by its {\it cycle type}, the number and lengths of cycles in its disjoint cycle decomposition \cite{Gallian}. One can easily check that all panmagic permutations in dimension $5$ have one fixed point and one $4$-cycle, and they also all belong to a single coset of $D_5$. However, in the $\pi_2D_7$ coset, we encounter two cycle types represented by $\pi_2=(1\,2\,4)(3\,6\,5)(7)$ and $\pi_5=(1\,5\,4\,6\,2\,3)(7)$. Further computations confirm that there are always at most two cycle types in each panmagic coset of $D_p$, and all cycles in each, other than the fixed point, have the same length. Moreover, when there are two cycle types the cycle length in one of them is twice that in the other.

A first step towards proving these observations is to recall that conjugate permutations have the same cycle types (essentially, because they permute numbers in identical ways up to relabeling) \cite{Gallian}. Then we observe that every affine permutation $\pi_{a,b}$ with $a\neq1$ is conjugate to $\pi_a$. Indeed, $\s=\k^{-t}\pi_a\k^t$ with $t\equiv-(a-1)^{-1}b\pmod{p}$. Thus, we only need to consider cycle types of $\pi_a$, which is much less daunting since their powers are easily tracked. This already confirms one of the above observations. As the  slopes of permutations in $\pi_aD_p$ are only $\pm a$, there can be at most two cycle types in it, that of $\pi_a$ and that of $\pi_{-a}$. 
\begin{corollary}\label{primecycletypes}
Let $a\in\Z_p^{\times}$ be panmagic. Then the disjoint cycle decomposition of $\pi_a$ consists of one fixed point and $\frac{p-1}{\ord_p(a)}$ cycles of equal length $\ord_p(a)$. Moreover, every permutation in $\pi_aD_p$ has the cycle type of $\pi_a$ or of $\pi_{-a}$.
\end{corollary}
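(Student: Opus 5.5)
The plan is to compute the orbits of $\pi_a$ on $\Z_p$ directly, using $\pi_a^k=\pi_{a^k}$, and then to reduce the rest of the coset to $\pi_{\pm a}$ by the conjugation observation made just before the statement.

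First, $0$ is a fixed point of $\pi_a$, since $\pi_a(0)=a\cdot0=0$. It is the only one. If $ai=i$, then $(a-1)i=0$. Because $a$ is panmagic, $a-1\in\+{\Z}_p$ by Corollary \ref{AfPmag}, so $i=0$.

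Next take $i\neq0$. Its orbit under $\pi_a$ is $\{i,ai,a^2i,\dots\}$. We have $\pi_a^k(i)=a^ki=i$ exactly when $(a^k-1)i=0$. Since $i$ is a unit mod $p$, this happens exactly when $a^k\equiv1\pmod p$. So the cycle through $i$ has length exactly $\ord_p(a)$, independently of $i$. The $p-1$ nonzero residues therefore split into cycles of equal length $\ord_p(a)$. Their number is $\frac{p-1}{\ord_p(a)}$, which is an integer by Lagrange's theorem applied to $\+{\Z}_p$ (or simply by counting).

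For the second claim, every element of $\pi_aD_p$ has the form $\pi_a\delta$ with $\delta\in D_p$. Writing $\delta$ as $\k^j$ or $\phi\k^j$, we get $\delta=\pi_{\e,c}$ with $\e=\pm1$. Hence $\pi_a\delta=\pi_{\e a,ac}$ has slope $\pm a$.

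Since $a$ is panmagic, $\pm a\neq1$; in fact $a\pm1$ are units. By the conjugation identity $\k^{-t}\pi_{\pm a}\k^{t}=\pi_{\pm a,b}$ with $t\equiv-(\pm a-1)^{-1}b$, each such permutation is conjugate to $\pi_{a}$ or $\pi_{-a}$. Conjugate permutations share cycle type, which finishes the argument.

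There is no real obstacle. The only points needing care are the use of $a\pm1\in\+{\Z}_p$, which guarantees both the uniqueness of the fixed point and the applicability of the conjugation, and verifying the conjugation formula itself. That check is $\k^{-t}\pi_a\k^t(i)=a(i+t)-t=ai+(a-1)t$, so choosing $(a-1)t=b$ works. This differs from the stated $t$ only by sign convention, and I would settle that by this one-line computation.
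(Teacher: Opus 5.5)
Your proof is correct and follows the paper's argument. The paper uses the same orbit computation, $a^k u=u\iff a^k=1$ for units $u$. It also relies on the observation made just before the statement: slopes in $\pi_aD_p$ are $\pm a$, and each $\pi_{a',b}$ with $a'\neq1$ is conjugate to $\pi_{a'}$ by a power of $\k$. Your explicit fixed-point uniqueness check via $a-1\in\+{\Z}_p$ is right, and so is your sign check. Indeed $\k^{-t}\pi_a\k^t=\pi_{a,(a-1)t}$, so the paper's $t\equiv-(a-1)^{-1}b$ actually corresponds to $\k^{t}\pi_a\k^{-t}$, which does not affect the conclusion.
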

\begin{proof} 
Recall that we identify residue $0$ with $p$ for affine permutations. Clearly, $\pi_a(0)=0$, and this gives us the fixed point $p$. The rest of residues are units $u$ and $\pi_a(u)=au$, so the cycle that starts with $u$ is of the form 
$$
(u\ au\ \cdots\ a^{m-1}u),
$$
where $m$ is the smallest positive integer with $a^mu=u$. Since $u$ is a unit this is equivalent to $a^m=1$, so $m=\ord_p(a)$ for all $u$ and is their common cycle length. Since there are $p-1$ units in $\Z_p^{\times}$ there are $\frac{p-1}{m}$ such cycles.
\end{proof}
Thus, there are at most two cycle types in each panmagic coset. When is there just one? We already know that this happens exactly when $\pi_a$ and $\pi_{-a}$ have the same type. By Corollary \ref{primecycletypes}, a necessary and sufficient condition is that $\ord_p(-a)=\ord_p(a)$. To make it more explicit, we will relate the  multiplicative orders of $a$ and $a^2$ (for any $n$).
\begin{lemma}\label{ord+-a}
If $\ord_n(a^2)$ is even then $\ord_n(a)=\ord_n(-a)=2\ord_n(a^2)$. If $\ord_n(a^2)$ is odd then one of $\ord_n(-a)$ and $\ord_n(a)$ is equal to $\ord_n(a^2)$ and the other to $2\ord_n(a^2)$. 
\end{lemma}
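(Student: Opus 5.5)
Let $m:=\ord_n(a^2)$ and note $(\pm a)^2=a^2$, so $a$ and $-a$ play symmetric roles and both have order dividing $2m$. The plan is to pin down $\ord_n(\pm a)$ using the elementary fact that $x^k=1$ iff $\ord_n(x)\mid k$. For $x=\pm a$ we have $x^{2m}=(a^2)^m=1$, so $\ord_n(x)\mid 2m$. Conversely, $(x^2)^{\ord_n(x)}=1$ forces $m\mid \ord_n(x)$. Hence $\ord_n(x)\in\{m,2m\}$ for both $x=a$ and $x=-a$. So each order is $m$ or $2m$, and $\ord_n(x)=m$ exactly when $x^m=1$.

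Next we compare $a^m$ and $(-a)^m=(-1)^m a^m$. We know $(a^m)^2=1$, so $a^m$ is a square root of $1$. Here I would be careful: for general $n$ the square roots of $1$ need not be only $\pm1$. The argument, however, only needs to decide when $a^m=1$ or $(-a)^m=1$; I do not need to know $a^m=\pm1$.

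\emph{Case $m$ even.} Then $(-a)^m=a^m$, so $a$ and $-a$ have the same order. Suppose that order were $m$. Then $a^m=1$, so $(a^2)^{m/2}=1$, contradicting minimality of $m=\ord_n(a^2)$. Thus both orders equal $2m$.

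\emph{Case $m$ odd.} Then $(-a)^m=-a^m$. Both $a^m=1$ and $-a^m=1$ cannot hold, since that would give $1=-1$, impossible for $n>2$ (for $n\le2$ one has $a=-a$, and the statement should be read with this trivial caveat, or $n$ Polya assumed). So at most one of $\pm a$ has order $m$. To show exactly one does, I need $a^m=\pm1$. In the setting of interest ($n=p$ prime, where the lemma is applied) this holds because $\pm1$ are the only square roots of $1$.

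The main obstacle is this last step for general $n$, and the clean fix is to avoid square roots entirely. Since $m$ is odd, $\gcd(2,m)=1$, so $a$ lies in the cyclic group generated by $a^2$. Concretely, $a=a\cdot(a^2)^{m}\cdot$ adjustments: write $1=2u-mv$ with $u=(m+1)/2$, $v=1$, giving $a^{m+1}=(a^2)^{(m+1)/2}$. The intended consequence is that $a^m$ equals $\pm1$ in the relevant cases, but I expect this to hold in general only for prime $n$, so I would state the final step for $n=p$. With $a^m=\pm1$, exactly one of $a^m=1$ and $(-a)^m=1$ holds. That element has order $m$ and the other has order $2m$, completing the proof.
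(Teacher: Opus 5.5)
Your argument follows the paper's route. Both proofs pin $\ord_n(\pm a)$ to $\{m,2m\}$, settle the even case by minimality of $m$ (valid for every $n$), and in the odd case use $(-a)^m=-a^m$.

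Your caution at the last step is justified, and here you are more careful than the paper. After obtaining $\ord_n(\pm a)\in\{m,2m\}$, the paper writes ``for definiteness, suppose $\ord_n(a)=\ord_n(a^2)$''. That silently discards the possibility that both orders equal $2m$, i.e.\ that $a^m$ is a square root of $1$ other than $\pm1$. For composite $n$ this happens, and the odd half of the lemma is false:
\begin{itemize}
\item for $n=8$, $a=3$, one has $m=1$ but $\ord_8(3)=\ord_8(5)=2$;
\item it fails even for a panmagic unit in a Polya dimension: take $n=91$, $a=30$, so $30\equiv2\pmod{7}$, $30\equiv4\pmod{13}$, and $29,30,31$ are units. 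Then $a^2\equiv81$ has order $3$, but $a^3\equiv64\not\equiv\pm1$, so $\ord_{91}(30)=\ord_{91}(61)=6=2m$.
\end{itemize}
So restricting the odd case to $n=p$ prime is necessary, not a weakness. More generally, it suffices that $n>2$ and $x^2=1$ forces $x=\pm1$. The restriction costs nothing, since the lemma is only applied to primes in Theorem~\ref{onecycle}.

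One correction: delete the ``clean fix'' paragraph. Its claim that $a\in\langle a^2\rangle$ because $m$ is odd is false. In fact $a\in\langle a^2\rangle$ holds if and only if $\ord_n(a)$ is odd, which fails exactly when $\ord_n(a)=2m$; for example, $p=7$, $a=3$ gives $\langle a^2\rangle=\{1,2,4\}$, which does not contain $3$. The identity $a^{m+1}=(a^2)^{(m+1)/2}$ also says nothing about $a^m$. With that paragraph removed, and the hypothesis on $n$ stated at the outset (including $n>2$, so that $-1\neq1$), your proof is complete.
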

\begin{proof} 
By a standard result of group theory \cite{Gallian}, 
\begin{equation}\label{orda^2} 
\ord_n(a^2)=\begin{cases}\ord_n(a),\,\ord_n(a)\text{ odd}\\\frac12\ord_n(a),\,\ord_n(a)\text{ even.}\end{cases}
\end{equation}
Hence, if $\ord_n(a^2)$ is even then so is $\ord_n(a)$. Indeed, if it were odd then by \eqref{orda^2} $\ord_n( a)=\ord_n(a^2)$ would also be odd, a contradiction. But then, by \eqref{orda^2} again, $\ord_n( a)=2\ord_n(a^2)$, and the same argument applies to $-a$.

Now suppose that $\ord_n(a^2)$ is odd. Since  $\ord_n(a^2)=\ord_n\big((-a)^2\big)$, formula \eqref{orda^2} implies that $\ord_n(\pm a)$ is either $\ord_n(a^2)$ or $2\ord_n(a^2)$. For definiteness, suppose $\ord_n(a)=\ord_n(a^2)$. Since it is odd $(-a)^{\ord_n(a^2)}=- a^{\ord_n(a)}=-1$, so $\ord_n(-a)\neq\ord_n(a^2)$ and it must be $2\ord_n(a^2)$.
\end{proof}
Our next result now follows easily.
\begin{theorem}\label{onecycle} Let $p\geq5$ be prime, $a\in\Z_p^{\times}$ be panmagic, and $N:=\ord_p(a^2)+1$. If $N$ is odd then all permutations in $\pi_aD_p$ have the same cycle type with the cycle lengths, aside from the fixed point, all equal to $2(N-1)$. If $N$ is even then permutations in $\pi_aD_p$ have two cycle types with the cycle lengths, aside from the fixed point, all equal to $N-1$ in one of them and $2(N-1)$ in the other.
\end{theorem}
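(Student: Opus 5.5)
The plan is to combine Corollary \ref{primecycletypes} with Lemma \ref{ord+-a}, with $N-1=\ord_p(a^2)$. By Corollary \ref{primecycletypes}, every permutation in $\pi_aD_p$ has the cycle type of $\pi_a$ or of $\pi_{-a}$. Here $-a$ is also panmagic: $-a\neq0,\pm1$ whenever $a\neq 0,\pm1$, so it is a panmagic unit for prime $p$. Hence the same corollary applies to $\pi_{-a}$. Thus $\pi_{\pm a}$ consists of one fixed point and cycles all of length $\ord_p(\pm a)$. Moreover, both types actually occur in the coset, since $\pi_a\in\pi_aD_p$ and $\pi_{-a}=\pi_a\pi_{-1}$ with $\pi_{-1}=\pi_{-1,0}\in D_p$ by Corollary \ref{AfPmag}. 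So everything reduces to comparing $\ord_p(a)$ and $\ord_p(-a)$.

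If $N$ is odd, then $\ord_p(a^2)=N-1$ is even. The first part of Lemma \ref{ord+-a} gives $\ord_p(a)=\ord_p(-a)=2(N-1)$. Hence $\pi_a$ and $\pi_{-a}$ have the same cycle type, and every permutation in the coset has nontrivial cycles of length $2(N-1)$.

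If $N$ is even, then $\ord_p(a^2)$ is odd. The second part of Lemma \ref{ord+-a} says that one of $\ord_p(\pm a)$ equals $N-1$ and the other equals $2(N-1)$. Since $N\ge3$ (as $a^2\neq1$), these lengths differ. Cycle types with nontrivial cycles of different lengths are different, so both types occur and there are exactly two.

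No step is really an obstacle. The only points needing care are that both $\pi_a$ and $\pi_{-a}$ lie in the coset, which makes the two types in the even case actually realized, and that $N-1\neq 2(N-1)$, which uses $N-1\ge1$.
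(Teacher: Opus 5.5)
Your proof is correct and follows the paper's argument. Corollary \ref{primecycletypes} reduces the question to comparing $\ord_p(a)$ with $\ord_p(-a)$, and the two cases of Lemma \ref{ord+-a} then finish it. Your extra checks are sound and fill in points the paper leaves implicit: that $-a$ is panmagic, that $\pi_{-a}=\pi_a\pi_{-1}$ with $\pi_{-1}\in D_p$ so both types actually occur in the coset, and that $N-1\neq 2(N-1)$.
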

\begin{proof} If $N$ is odd then $\ord_p(a^2)=N-1$ is even and Lemma \ref{ord+-a} implies that $\ord_p(-a)=\ord_p(a)=2\ord_p(a^2)=2(N-1)$. If  $N$ is even then $\ord_p(a^2)=N-1$ is odd. By Lemma \ref{ord+-a}, one of $\ord_p(-a)$ and $\ord_p(a)$ is then equal to $\ord_p(a^2)=N-1$ and the other is twice that.
\end{proof}
\noindent As a simple observation, transpositions, i.e. $2$-cycles, can only appear in cosets when $N=2$, i.e. when the coset is $D_p$ itself. In other words, affine panmagic permutations have no transpositions in prime dimensions. 

Theorem \ref{onecycle} means that cosets of $\pi_aD_p$ have permutations with the same cycle type if and only if their strict arity $N$ is odd. But then $\ord_p(a)=2\ord_p(a^2)=2(N-1)$ is divisible by $4$. Since the order of any element must divide the order of $\Z_p^{\times}$ by Lagrange's theorem \cite{Gallian}, we conclude that $4\mid p-1$ and $p$ is of the form $4k+1$. But this is the exact same condition as in Corollary \ref{primeallstrictarity}\,! It holds if and only if strictly ternary panmagic cosets of $D_p$ exist. In turn, Theorem \ref{onecycle} implies that all permutations in such cosets have the same cycle type. They have only $4$-cycles and a single fixed point. Summarizing, we proved the following neat equivalence that highlights the special panmagic of $4k+1$ primes.
\begin{corollary}\label{4cyc}
The following conditions on a prime $p$ are equivalent: 

\textup{(i)} $D_p$ has panmagic cosets with all permutations of the same cycle type; 

\textup{(ii)} $D_p$ has panmagic cosets of strictly odd arity; 

\textup{(iii)} $D_p$ has strictly ternary panmagic cosets;

\textup{(iv)} $p=4k+1$ for a positive integer $k$.
\end{corollary}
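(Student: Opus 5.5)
The plan is to prove the cycle (i)$\Rightarrow$(ii)$\Rightarrow$(iv)$\Rightarrow$(iii)$\Rightarrow$(i). Throughout, I interpret ``panmagic cosets'' as the cosets $\pi_aD_p$ with $a$ panmagic, i.e.\ $a\neq0,\pm1$. These are all the cosets of $D_p$ in $\Aff(\Z_p)$ other than $D_p$ itself, by Corollary \ref{affinegeneration} and Theorem \ref{primetheory}. For such a coset, Theorem \ref{primestrictarity} identifies the strict arity as $N=\ord_p(a^2)+1$, and Theorem \ref{onecycle} describes its cycle types in terms of the parity of $N$.

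For (i)$\Rightarrow$(ii), take a panmagic coset $\pi_aD_p$ whose permutations all share one cycle type. By Theorem \ref{onecycle}, when $N$ is even the coset carries two distinct cycle types, with cycle lengths $N-1$ and $2(N-1)$. These differ because $N-1\geq1$; in fact $N\geq3$, since $a^2\neq1$. So having a single cycle type forces $N$ to be odd, and then the strict arity $N$ is odd. The converse (ii)$\Rightarrow$(i) follows from the same theorem, though the cycle does not need it.

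For (ii)$\Rightarrow$(iv), suppose the strict arity $N$ is odd. Then $\ord_p(a^2)=N-1$ is even, so Lemma \ref{ord+-a} gives $\ord_p(a)=2(N-1)$, which is divisible by $4$. Lagrange's theorem applied in $\Z_p^{\times}$, a group of order $p-1$, gives $4\mid p-1$, so $p=4k+1$ with $k\geq1$ because $p\geq5$.

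For (iv)$\Rightarrow$(iii), I will invoke Corollary \ref{primeallstrictarity} with $N=3$. The condition $p=2(N-1)k+1=4k+1$ holds, so $\pi_{r^k}D_p$ is strictly ternary. For (iii)$\Rightarrow$(i), a strictly ternary coset has $N=3$, which is odd, so by Theorem \ref{onecycle} all its permutations share a single cycle type: one fixed point and $4$-cycles. This closes the cycle.

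The only delicate point is the boundary case $p=5$, together with making sure the cosets in question are genuinely panmagic, i.e.\ not equal to $D_p$. Both are handled by $a\neq\pm1$, which gives $N\geq3$, and by $k\geq1$ in Corollary \ref{primeallstrictarity}.
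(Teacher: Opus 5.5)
Your proposal is correct and follows essentially the same route as the paper: Theorem~\ref{onecycle} ties a single cycle type to odd strict arity, Lemma~\ref{ord+-a} together with Lagrange's theorem forces $4\mid p-1$, Corollary~\ref{primeallstrictarity} with $N=3$ produces the strictly ternary coset $\pi_{r^k}D_p$, and Theorem~\ref{onecycle} then closes the loop. You also check explicitly that $a\neq\pm1$ gives $N\geq3$, so that for even $N$ the two cycle types really differ and the coset is not $D_p$ itself; the paper leaves this point implicit.
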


\section{$N$-ary subgroups}\label{NarySub}

Before concluding, we wish to put things into a broader algebraic perspective and tie up some loose ends. Theorems \ref{primetheory} and \ref{primestrictarity} provide a template for how sets closed under $N$-ary multiplication arise as cosets, but it is not yet clear how much of it is specific to those examples and how much is essential. Moreover, closure under multiplication does not quite entitle us to call them $N$-ary {\it groups}. If the binary case is our guide, we also need an analog of closure under taking inverses. We will briefly develop a bit of $N$-ary group theory to fill in those gaps.

$N$-ary groups can be defined abstractly, like ordinary groups, but to keep our exposition elementary we will only define $N$-ary {\it sub}groups of ordinary groups. Those are the only ones we need, and one can prove that any abstract $N$-ary group is isomorphic to an $N$-ary subgroup \cite{GBV,Shah}.  
\begin{definition}\label{NaryG}
A subset $C\subseteq G$ of a group $G$ is called its $N$-ary subgroup when for any $N$ elements $a_1,\dots,a_N\in C$, the product $a_1\cdots a_N\in C$; and for any $a\in C$, the element $a^{3-2N}\in C$. 
\end{definition} 
\noindent The first condition replaces closure under the binary multiplication by an $N$-ary one and the second replaces closure under inversion. When $N=2$ we recover the definition of the ordinary (binary) subgroup. 

In the ternary case, $a^{3-2N}=a^{-3}$, but we still get closure under taking inverses because $a^{-1}=aaa^{-3}\in C$ as a triple product. In fact, for $N\geq3$ we can generally replace the condition $a^{3-2N}\in C$ with a simpler equivalent one, $a^{2-N}\in C$. Indeed, $a^{2-N}=a^{N-1}a^{3-2N}$ and $a^{3-2N}=a^{N-3}\left(a^{2-N}\right)^3$, and both are $N$-ary products for $N\geq3$. The element $a^{2-N}$ is called {\it skew to $a$} and is typically used as the replacement for the inverse in $N$-ary group theory \cite{GBV,Shah}. 

We did not say anything about the identity element $e$, and that is by design. In the binary case, $e$ will be in $C$ because we can multiply any element by its inverse. But also conversely, if $e\in C$ then $C$ is a binary subgroup because we can reproduce binary products by taking $N$-ary ones with $N-2$ copies of $e$ in them. Indeed, under Definition \ref{NaryG} any binary subgroup is also $N$-ary for any $N\geq3$. 

Nonetheless, there are non-binary $N$-ary subgroups for $N>2$. In fact, many of them are quite familiar, albeit rarely thought about this way. For example, odd integers form a ternary (additive) subgroup of $\Z$. Indeed, while the sum of two odd integers is not odd, the sum of any three is. The inverse (negative) of an odd integer is also odd. Similarly, odd permutations form a non-binary ternary subgroup of $S_n$. Non-zero purely imaginary numbers $i\+{\R}$ form a (multiplicative) ternary subgroup of non-zero complex numbers $\+{\C}$. Indeed, $ia\cdot ib\cdot ic=i(-abc)$ and $(ia)^{-1}=i(-a)$.

One can check that $\psi D_5=\pi_3D_5$, the panmagic ternary coset from Section \ref{57PMag}, does contain the inverses of all of its elements and is a ternary subgroup. The quaternary coset $\psi D_7=\pi_5D_7$ does not; in fact, its inverses belong to the other ternary panmagic coset $\pi_3D_7$. But it does contain the skews $a^{-2}$ of its elements and is a quaternary subgroup. Fortunately, we do not need to go back and check that our cosets closed under $N$-ary multiplication were also closed under skewing. This is because all of them were finite, and, as with ordinary finite subgroups, closure under multiplication is enough to be an $N$-ary subgroup.
\begin{theorem} Suppose $C\subseteq G$ is closed under $N$-ary multiplication and finite. Then it is an $N$-ary subgroup of $G$.
\end{theorem}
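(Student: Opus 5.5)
We need to show that for every $a\in C$ the element $a^{3-2N}$ lies in $C$. For $N\geq3$ it suffices, by the remark after Definition \ref{NaryG}, to show $a^{2-N}\in C$. For $N=2$ we need $a^{-1}\in C$, which is the classical fact that a finite subset closed under binary multiplication is a subgroup. I will handle both cases at once by working with $a^{3-2N}$ directly.

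Fix $a\in C$. First I will show that $a^{1+k(N-1)}\in C$ for every integer $k\geq0$, by induction on $k$. The case $k=0$ is trivial. For the inductive step, $a^{1+(k+1)(N-1)}=a^{1+k(N-1)}\cdot a^{N-1}$ is an $N$-ary product of the element $a^{1+k(N-1)}\in C$ with $N-1$ copies of $a$, so it lies in $C$.

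Next I use finiteness. $C$ is finite and nonempty, so the elements $a^{1+k(N-1)}$, $k\geq0$, cannot all be distinct. Hence there are $k<l$ with $a^{1+k(N-1)}=a^{1+l(N-1)}$. Cancelling in the group $G$ gives $a^{m(N-1)}=e$ with $m=l-k\geq1$, so $a$ has finite order $d$ dividing $m(N-1)$. In particular every power $a^s$ depends only on $s$ modulo $m(N-1)$.

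Finally I choose $k$ so that the exponent matches. Take $k\geq0$ with $1+k(N-1)\equiv 3-2N\pmod{m(N-1)}$. Such a $k$ exists because $3-2N-1=-2(N-1)$ is a multiple of $N-1$, so we need $k\equiv-2\pmod m$, e.g. $k=2m-2\geq0$. Then $a^{3-2N}=a^{1+k(N-1)}\in C$, which is what Definition \ref{NaryG} requires.

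The only delicate point is extracting a finite period from finiteness in the right residue class, i.e. getting $a^{m(N-1)}=e$ rather than just $a^d=e$. The arithmetic progression argument handles this.
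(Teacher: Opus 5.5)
Your proof is correct, and it takes a different route from the paper's.

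\textbf{The paper's route.} The paper works with the set $C^{N-1}$ of all $(N-1)$-fold products of elements of $C$.
\begin{itemize}
\item It checks that this set is closed under binary multiplication: a product of $2(N-1)=N+(N-2)$ factors collapses to $N-1$ factors.
\item It invokes the standard fact that a finite multiplicatively closed subset is a subgroup.
\item From this it deduces $a^{1-N}=(a^{N-1})^{-1}\in C^{N-1}$, and hence $a^{2-N}=a\cdot a^{1-N}\in C$.
\item Finally it relies on the remark after Definition~\ref{NaryG} to pass from the skew element $a^{2-N}$ to $a^{3-2N}$ when $N\geq3$. For $N=2$ it uses that $C^{1}=C$ is itself a subgroup.
\end{itemize}

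\textbf{Your route.} You stay inside the cyclic group generated by $a$. The progression $a^{1+k(N-1)}$ stays in $C$, using only $N$-ary products of the special form $x\,a^{N-1}$. The pigeonhole principle then gives $a^{m(N-1)}=e$. The choice $k=2m-2$ lands exactly on the exponent $3-2N$.

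\textbf{What each approach buys.} Your argument is more self-contained. In effect you reprove the relevant part of the finite-subgroup criterion for a single cyclic subgroup. You also treat $N=2$ and $N\geq3$ uniformly, and you reach $a^{3-2N}$ directly without the skew-element remark. It makes clear that finiteness is used only to guarantee that each $a\in C$ has finite order.

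The step you flag as delicate is in fact not delicate. If $a^d=e$ for any $d\geq1$, then $a^{d(N-1)}=e$, so $m=d$ already works.

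What the paper's route buys is structural. It exhibits $C^{N-1}$ as a binary subgroup, which is exactly the Post associate used in the Post coset discussion that follows.
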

\begin{proof} Let $C^{N-1}$ denote the set of $(N-1)$-element products of elements of $C$. We claim that it is a binary subgroup. Indeed, a product of two $(N-1)$-products of elements of $C$ has the total of $2(N-1)=N+(N-2)$ factors. The first $N$ multiply to an element of $C$ by assumption, so it is again a product of $N-1$ factors from $C$. If $C$ is finite then so is $C^{N-1}$. Since it is closed under binary multiplication it must be a subgroup by a standard result of group theory \cite{Gallian}.

Let $a\in C$, then $a^{N-1}\in C^{N-1}$. Since a binary subgroup contains inverses of its elements, $a^{1-N}=(a^{N-1})^{-1}\in C^{N-1}$, i.e. is a product of $N-1$ elements of $C$. But then $a^{2-N}=aa^{1-N}$ is a product of $N$ elements of $C$ and must be in $C$ by assumption. Thus, $C$ is an $N$-ary subgroup.
\end{proof}
It turns out that the trick for obtaining $N$-ary subgroups from cosets, which we exploited in Theorems \ref{primetheory} and \ref{primestrictarity}, is quite general. It constitutes the `easy' direction of the Post coset theorem \cite{GBV,Shah}.
\begin{theorem}[Post]\label{PostConv} Let $H\subset\H\subseteq G$ be a pair of (binary) subgroups of $G$ with $H$ normal in $\H$ and $\H/H\simeq\Z_{N-1}$. Then every coset of $H$ in $\H$ is an $N$-ary subgroup of $G$.
\end{theorem}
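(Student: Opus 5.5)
Let $C=gH$ be a coset of $H$ in $\H$, with $g\in\H$. I will check the two conditions of Definition \ref{NaryG} directly, using only the normality of $H$ and the fact that the quotient is cyclic of order $N-1$. Normality in $\H$ gives $gH=Hg$ for every $g\in\H$. Then, by the same induction as in the proof of Theorem \ref{primetheory}, any product of $k$ elements of $C$ satisfies
$$
(g h_1)(g h_2)\cdots(g h_k)\in g^kH ,
$$
since each $h_i g$ can be rewritten as $g h_i'$ with $h_i'\in H$, and the factors from $H$ then collect on the right. So $C^k\subseteq g^kH$, which in quotient language says that the product of $k$ elements from the class $\bar g$ lies in the class $\bar g^{\,k}$.

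Next I use that $\H/H\simeq\Z_{N-1}$. Every element of a group of order $N-1$ satisfies $x^{N-1}=e$ by Lagrange's theorem. Hence $\bar g^{\,N-1}=\bar e$, that is, $g^{N-1}\in H$. It follows that
$$
g^NH=g\,g^{N-1}H=gH=C ,
$$
so any product of $N$ elements of $C$ lies in $C$, which is the first condition. For the second condition, take $a\in C$. Then $\bar a=\bar g$ in the quotient, and
$$
\overline{a^{3-2N}}=\bar g^{\,3-2N}=\bar g\cdot\left(\bar g^{\,N-1}\right)^{-2}=\bar g ,
$$
so $a^{3-2N}\in gH=C$. (One could equally use the skew element $a^{2-N}$, as noted after Definition \ref{NaryG}.)

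Finiteness is not assumed, so I cannot invoke the preceding theorem and must check closure under skewing directly, as above. There is no real obstacle here. The only point needing care is that the argument uses only $\bar g^{\,N-1}=\bar e$, which holds for every element of $\Z_{N-1}$, not just for a generator. So every coset, including $H$ itself, is covered; this matches the remark that binary subgroups are also $N$-ary.
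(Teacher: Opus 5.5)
Your proof is correct and takes essentially the same route as the paper. Normality lets you collect the coset representatives so that an $N$-fold product lands in $g^NH=gH$, because $g^{N-1}\in H$, and the inverse-type condition is handled by the same bookkeeping. The only differences are cosmetic: you phrase it in the quotient $\H/H$ and verify $a^{3-2N}\in C$ directly, whereas the paper checks the skew element $a^{2-N}$, which is equivalent only for $N\ge3$. Your version therefore also covers $N=2$ without appealing to that equivalence.
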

\begin{proof} Let $C=bH$ for $b\in\H$. Since $H$ is normal, for any $h\in H$, we have $hb=bh'$ for some $h'\in H$, and since $\H/H$ is cyclic of order $N-1$, we have $b^{N-1}\in H$ for any $b\in\H$. Therefore, for $a_i=bh_i$ with $h_i\in H$, by induction,
$$
a_1\cdots a_N=bh_1 bh_2\cdots bh_N=b(bh_1'h_2b\cdots bh_N)=\dots=b(b^{N-1}\hat{h})\in bH=C.
$$
Similarly, since $(b^{-1})^{N-1}\in H$, moving $b^{-1}$-s to the left we obtain for $a=bh$, 
$$
a^{2-N}=(h^{-1}b^{-1})^{N-2}=(b^{-1})^{N-2}\hat{h}=b\big((b^{-1})^{N-1}\hat{h}\big)\in bH=C.
$$
Thus, $C$ is an $N$-ary subgroup.
\end{proof}
\noindent In Theorem \ref{primetheory}, $G=S_p$ was the symmetric group, $H=D_n$ was the dihedral group, and $\H=\Aff(\Z_p)$ was the general affine group. One can check that for the purely imaginary numbers we have $G=\+{\C}$, $H=\+{\R}$, and $\H=\langle i,\+{\R}\rangle=\+{\R}\cup i\+{\R}$.

The hard direction of the Post coset theorem is that any $N$-ary group arises as a Post coset up to isomorphism, i.e. it is isomorphic to a coset of an index $N-1$ normal subgroup of a larger group with the cyclic quotient. It was proved by Post in his 1940 seminal paper on $N$-ary groups. In general, given an $N$-ary subgroup $C\subseteq G$, the group $H=C^{N-1}$ is called its {\it Post associate} and the group $\H=\langle C\rangle$ its {\it Post cover}. In our panmagic examples, the Post associate was always $D_p$, while the Post cover could be the entire group 
$\Aff(\Z_p)$ or its subgroup of the form $\langle\pi_a,D_p\rangle$ with a panmagic unit $a$. 

Note that $N$ does not, in general, have to be the strict arity of $C$, and that the Post cover may depend on which $N$ it is considered under. It follows from the Post coset theorem that the strict arity is $\ord(C)+1$, where $\ord(C)$ is the order of $C$ as an element of the quotient group $\H/H$. We will not elaborate further and refer the interested reader to \cite{GBV,Shah} and references therein. 

In conclusion, let us neatly repackage Theorems \ref{primetheory} and \ref{primestrictarity} in terms of $N$-ary group theory.
\begin{theorem} Let $p\geq5$ be prime. If $a\in\Z_p^{\times}$ is panmagic then $\pi_aD_p$ is a strictly $N$-ary panmagic subgroup of $S_p$ with $N=\ord_p(a^2)+1$, the Post associate $D_p$ and the Post cover $\langle\pi_a,D_p\rangle$. Moreover, all $\pi_aD_p$ with $a\neq0,\pm1$ are $N$-ary panmagic subgroups of $S_p$ with  common $N=\frac{p+1}{2}$, common Post associate $D_p$, and common Post cover $\Aff(\Z_p)$.
\end{theorem}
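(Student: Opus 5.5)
The plan is to assemble the final theorem from Theorems \ref{primestrictarity} and \ref{primetheory}, together with the finiteness theorem and the Post coset theorem (Theorem \ref{PostConv}). Two claims need checking: that $\pi_aD_p$ is an \emph{$N$-ary subgroup} in the sense of Definition \ref{NaryG}, and not merely closed under $N$-ary products; and that its Post associate $C^{N-1}$ and Post cover $\langle C\rangle$ are the groups named in the statement.

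For the first assertion, fix a panmagic $a$ and set $C=\pi_aD_p$ and $N=\ord_p(a^2)+1$. Theorem \ref{primestrictarity} already says that $C$ consists of panmagic permutations and is strictly $N$-ary. It also gives $D_p$ normal in $\langle\pi_a,D_p\rangle$ with quotient $\Z_{N-1}$. Two routes then show $C$ is an $N$-ary subgroup, and either suffices: Theorem \ref{PostConv} applied to $H=D_p$ and $\H=\langle\pi_a,D_p\rangle$, or simply the finiteness theorem, since $C\subseteq S_p$ is finite. For the Post associate, I will reuse the computation in the proof of Theorem \ref{primetheory}: $C^i=\pi_a^iD_p$ by normality, so $C^{N-1}=\pi_{a^{N-1}}D_p$. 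Since $(a^{N-1})^2=(a^2)^{N-1}=1$, we have $a^{N-1}=\pm1$, so $\pi_{a^{N-1}}\in D_p$ and $C^{N-1}=D_p$. For the Post cover, $\langle C\rangle$ contains $\pi_a$ and $\pi_a^{-1}\cdot\pi_a\delta=\delta$ for every $\delta\in D_p$, so $\langle C\rangle\supseteq\langle\pi_a,D_p\rangle$. The reverse inclusion is trivial because $C\subseteq\langle\pi_a,D_p\rangle$.

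For the second assertion, take any $a\neq0,\pm1$. By Corollary \ref{AfPmag} in prime dimension, $a$ is panmagic. Writing $a=r^j$ for a primitive root $r$, we have $\pi_aD_p=C_r^{\,j}$, where $C_r=\pi_rD_p$. Since $a\neq\pm1$, $j$ is not a multiple of $\frac{p-1}{2}$, so $C_r^{\,j}$ is one of the panmagic cosets $C_r^{\,i}$ with $1\le i\le N-2$ from Theorem \ref{primetheory}, where $N=\frac{p+1}{2}$. Theorem \ref{primetheory} gives $D_p$ normal in $\Aff(\Z_p)$ with quotient $\Z_{\frac{p-1}{2}}$. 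Theorem \ref{PostConv} then makes every such coset an $\frac{p+1}{2}$-ary subgroup. The associate is again $D_p$: $(\pi_aD_p)^{N-1}=\pi_{a^{(p-1)/2}}D_p$, and $a^{(p-1)/2}=\pm1$ by Fermat's little theorem, so this equals $D_p$.

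The step I expect to be the main obstacle is the Post cover in this second assertion. The paper notes that the cover may depend on $N$, and $\langle C\rangle=\langle\pi_a,D_p\rangle$ as a group equals $\Aff(\Z_p)$ only when $a^2$ generates the squares, that is, when $\ord_p(a^2)=\frac{p-1}{2}$. So for a non-generating $a$ I would interpret ``Post cover under arity $N=\frac{p+1}{2}$'' as the ambient group $\H=\Aff(\Z_p)$ of the Post coset decomposition in Theorem \ref{PostConv}. This is the group in which all the cosets $\pi_aD_p$ appear simultaneously as the cosets of the cyclic quotient of order $N-1$. I would state this interpretation explicitly rather than claim $\langle\pi_aD_p\rangle=\Aff(\Z_p)$, which fails, e.g., when $p=13$ and $a=5$, since then $a^2=-1$ and $\langle\pi_5,D_{13}\rangle$ is a proper subgroup.
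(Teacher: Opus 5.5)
Your proposal is correct and follows the route the paper intends. The paper gives no separate proof; it only ``repackages'' Theorems~\ref{primetheory} and~\ref{primestrictarity} through the finiteness theorem and Theorem~\ref{PostConv}, and your checks that $C^{N-1}=\pi_{a^{N-1}}D_p=D_p$ and $\langle C\rangle=\langle\pi_a,D_p\rangle$ supply exactly the details it leaves implicit.

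Your caveat about the second Post cover is also justified. Under the paper's definition $\H=\langle C\rangle$, the claim fails whenever $\ord_p(a^2)<\frac{p-1}{2}$: for $p=13$, $a=5$ one gets $|\langle\pi_5,D_{13}\rangle|=52<156=|\Aff(\Z_{13})|$. Even Post's universal covering group of $\pi_5D_{13}$ taken at arity $7$, although of order $156$, has a center of order $3$ and so is not $\Aff(\Z_{13})$. Reading ``Post cover'' as the ambient group $\H=\Aff(\Z_p)$ of Theorem~\ref{PostConv}, as you do, is therefore the only reading under which the statement holds.
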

\noindent The appearance of $\Aff(\Z_p)$ as the common Post cover is a consequence of the existence of primitive roots in prime dimensions. Indeed, when $r$ is a primitive root, permutations of a single coset $\pi_rD_p$ generate the entire group of affine panmagic permutations. In turn, $D_p$ is the common Post associate because it is the largest subgroup that preserves panmagic in those dimensions. These observations point to what should change and what should stay the same in non-prime dimensions.

\section{Conclusions and open problems}

We developed an algebraic  theory of affine panmagic permutations that focuses on their algebraic and number-theoretic properties. Their matrices are the simplest kind of panmagic squares and they also represent configurations of non-attacking queens on a toroidal chessboard. But it is treating them as permutations that unlocked many new results. We hope that our algebraic approach and its generalizations will be useful for solving other problems. Let us briefly outline some adjacent topics and open problems that lie ahead for those willing to pursue the subject further.

We expect that most of our results on $N$-ary groups and cycle types of panmagic permutations generalize to composite dimensions. However, one would need a different approach to proving them because the only composites satisfying the Polya condition that have primitive roots are odd prime powers. Even for them, the panmagic/dihedral dichotomy fails -- there exist units that are neither panmagic nor dihedral. Moreover, in composite dimensions, $D_n$ is no longer the largest subgroup of $S_n$ whose elements preserve panmagic, and one needs to consider Post cosets of its extensions. The cycle types of panmagic permutations also become more complex, and no longer have all cycles of the same length aside from the fixed point as in Theorem \ref{onecycle}.

%Theorem \ref{GAcycletypes} describes cycle types not only of panmagic permutations but of all affine permutations with fixed points. However, already dihedral permutations may have no fixed points. The cyclic shift $\k=\pi_{1,1}$ is one of them and it has a single cycle of length $n$. Cycles of affine permutations are known as linear congruential sequences in random number generation and they are studied in \cite{Mars}, but cycle types of multidihedral and lifted dihedral permutations without fixed points do not seem to be known in general. A partial result in this direction, that $\pi_{a,b}$ has a single cycle of length $n$ when $a\equiv1\pmod{p}$ for every $p\mid n$ and $b$ is relatively prime to $n$, is known as the Hull-Dobell theorem \cite{HD,Mars}. In our terminology, $a$ from the Hull-Dobell theorem is none other than a lift of unity modulo $n$, a special case of lifted dihedral units. 

Affine panmagic permutations seem to be closely related to the uniform step method for constructing ``natural" panmagic squares, those filled with natural numbers from $1$ to $n^2$. Planck used them to construct modular queens solutions already in 1900 \cite{BS2}. The method only works in Polya dimensions, and there are intriguing parallels between Lehmer's results on it \cite{Lehm} and ours. More recently, constructions linking natural panmagic squares to non-affine panmagic permutations were also discovered \cite{BS1}.

Counting all panmagic permutations in dimension $n$ is known to be a hard problem, and even good estimates are hard to come by. It is known that there are more than $2^{\sqrt{\frac{p-1}2}}$ of them for prime $n=p$, and, conjecturally, the count is asymptotically $\sim n^{\a n}$ for some $\a>0$ \cite{BS2}. Since there are only $\pfi_3(n)n$ affine panmagic permutations and $\pfi_3(n)\leq n$, the vast majority of panmagic permutations are non-affine for large $n$. While some special constructions of such permutations are known \cite{BS2}, they are far from producing the entire set or revealing its algebraic structure. 

Non-affine panmagic permutations present new, more complex questions. For example, are there $N$-ary groups consisting of them? Non-affine panmagic cosets of $D_n$ are ruled out by Theorem \ref{NormDn}, but there may be other groups that work. One would have to find pairs of subgroups of $S_n$, one normal in the other, with non-affine panmagic cosets. Some candidate groups that act on general panmagic permutations are considered in \cite{Eng}.

The linear span of permutation matrices of any $N$-ary panmagic subgroup is an $N$-ary algebra of panmagic squares. Many such examples are provided by our Corollary \ref{primeallstrictarity}. So far, only ternary panmagic algebras have been considered in the literature and they have an appealing alternative description in terms of linear algebra. Consider an invertible matrix $Q$ that commutes with both $P_{\phi}$ and $P_{\k}$, and let $E$ be the square matrix with all entries $1$. Matrices $A$ such that $AQ+QA$ is a scalar multiple of $E$ are called $Q$-regular, and they are panmagic squares that form a ternary algebra \cite{Nord}. Thompson's algebra, the linear span of $\pi_2D_5$, is $Q$-regular with $Q=\frac12I+P_{\k}+P_{\k}^{-1}$. Can such $Q$ be found for other ternary groups from our Corollary \ref{primeallstrictarity}? And conversely, which $Q$-regular algebras are spanned by ternary groups and how can those groups be recovered from $Q$? More generally, what is a linear algebra description of $N$-ary panmagic algebras, and when are they linear spans of $N$-ary panmagic groups?

Although affine panmagic permutations are a small fraction of all of them, this tells us little about their share of the space of panmagic squares. Indeed, there are $n!$ permutations in $S_n$, but the dimension of the linear span of all permutation matrices (which is the space of semimagic squares) is only $(n-1)^2+1$. This means that there are massively many linear relations among permutations matrices, and it is {\it a priori} possible that panmagic permutation matrices (or already affine ones) span the entire space of panmagic squares for Polya $n$. Do they? 

When we consider only squares with positive entries and positive linear combinations, the answer is no for $n>5$, as proved in \cite{AlKi}. For odd $n$, the space of panmagic squares is known to be $(n-2)^2$-dimensional \cite{Hou}, so we could answer the question if we counted linearly independent panmagic  permutation matrices. Questions about linear relations among matrices of group elements belong to the group representation theory. For the affine case, this suggests looking into representation theory of $\Aff(\Z_n)$ and its subgroups. But this is a task for another day.

\end{document}